\newif\ifdebug
\newif \iffig
\newif \iftable
\def\makeautorefname#1#2{\expandafter\def\csname#1autorefname\endcsname{#2}}
\def\equationautorefname~#1\null{(#1)\null}
\theoremstyle{definition}
\let\c@obs=\c@thm
\let\c@cor=\c@thm
\let\c@prop=\c@thm
\let\c@lem=\c@thm
\let\c@prob=\c@thm
\let\c@con=\c@thm
\let\c@conj=\c@thm
\let\c@defn=\c@thm
\let\c@notn=\c@thm
\let\c@notns=\c@thm
\let\c@exmp=\c@thm
\let\c@ax=\c@thm
\let\c@pro=\c@thm
\let\c@ass=\c@thm
\let\c@warn=\c@thm
\let\c@rem=\c@thm
\let\c@sch=\c@thm
\let\c@equation\c@thm
\title{Finiteness of totally geodesic hypersurfaces}
\thanks{Revised \textsc{\today}}
\date{June 2024}
\author{Simion Filip}
\address{Department of Mathematics\\University of Chicago\\Chicago, IL 60637}
\email{sfilip@math.uchicago.edu}
\author{David Fisher}
\address{Department of Mathematics\\Rice University\\Houston, TX 77005}
\email{df32@rice.edu}
\author{Ben Lowe}
\address{Department of Mathematics\\University of Chicago\\Chicago, IL 60637}
\email{loweb24@uchicago.edu}
\begin{document}

\begin{abstract}
We prove that a closed negatively curved analytic Riemannian manifold that contains infinitely many totally geodesic hypersurfaces is isometric to an arithmetic hyperbolic manifold.
Equivalently, any closed analytic Riemannian manifold with negative sectional curvature has only finitely many totally geodesic hypersurfaces, unless it has constant curvature.
\end{abstract}

\maketitle


\section{Introduction}
	\label{sec:introduction}

The main result of this paper is:
\begin{theoremintro}
	\label{thm:main}
	Let $(M,g)$ be a closed, real-analytic Riemannian manifold with negative sectional curvature, of dimension $n\geq 3$.
	Suppose that $M$ contains infinitely many closed totally geodesic immersed hypersurfaces.

	Then $M$ is isometric to a hyperbolic manifold.
	Furthermore $M$ is arithmetic.
\end{theoremintro}
\noindent In the above statement, ``hyperbolic'' refers to a Riemannian manifold with constant strictly negative sectional curvature.
Equivalently, up to rescaling the Riemannian metric, $M$ is isometric to $\bH^n/\Gamma$ where $\bH^n$ is hyperbolic $n$-space, and $\Gamma$ is a lattice which is moreover arithmetic (see \cite[Ch.~IX]{Margulis_Discrete-subgroups-of-semisimple-Lie-groups1991} for the notion of arithmetic lattice).

\subsubsection*{Remarks}
\leavevmode
\begin{enumerate}
\item Our proof works under the more general condition that the geodesic flow of $(M,g)$ is Anosov, see e.g. \cite[Thm.~3.2]{Eberlein1973_When-is-a-geodesic-flow-of-Anosov-type-III} for equivalent characterizations of this property.
Our proof also applies to $M$ of finite volume instead of compact, with some additional technical assumptions spelled out in \autoref{sssec:assumptions_for_stable_unstable_manifolds} and \autoref{sssec:recurrence_in_the_noncompact_case}.

\item The arithmeticity of $M$ is a consequence of the prior conclusions and \cite[Thm.~1.1]{BFMS1}. See also \cite[Thm.~1.1]{MohammadiMargulis2022_Arithmeticity-of-hyperbolic-3-manifolds-containing-infinitely-many-totally} for the case $n=3$.

\item Arithmetic hyperbolic manifolds of the first type have infinitely many closed totally geodesic immersed hypersurfaces.  All other arithmetic hyperbolic manifolds do not. See \cite[Corollary 5.12]{belolipetsky2021subspace} and the remark following \cite[Theorem 1.2]{belolipetsky2021subspace}.

\item It would be interesting to weaken the assumption of analyticity to smoothness, or relax the Anosov assumption to the manifold being of rank $1$ and nonpositively curved (see \cite{knieper98} for this last condition).
In work in preparation, we will address the rank $1$ case, where without analyticity of the metric the result does not hold.
See also \cite[Lecture~III]{BallmannGromovSchroeder1985_Manifolds-of-nonpositive-curvature} for other results that hold in the analytic but not smooth setting of nonpositively curved manifolds.

Additionally, it is worth noting that all other currently available finiteness results that motivate this work are in the homogeneous, or complex-analytic and algebraic settings, which are substantially more constrained than our case.
For example, they only apply to geometric settings that only depend on finite-dimensional parameters.
\end{enumerate}

The motivation for this result can be seen as coming from three different directions: classical Riemannian geometry, a growing body of finiteness results in algebraic and differential geometry and dynamics and an emerging literature integrating ideas from homogeneous dynamics into more general smooth dynamical contexts.

Some of the most sought after results in classical Riemannian geometry provide conditions under which
metrics have constant curvature.
\autoref{thm:main} is far from the first result to give such a characterization in terms of totally geodesic submanifolds. In 1928 Cartan proved his ``axiom of $k$-planes''.

\begin{theoremintro}[Cartan]
\label{thm:Cartan}
Let $1<k<n$ and let $(M,g)$ be a Riemannian manifold. Assume that for every $x$ in $M$ and every $k$ plane
$V$ in $TM_x$, there is a totally geodesic manifold $N \subset M$ through $x$ such that $TN_x=V$.  Then
$M$ is a space form.
\end{theoremintro}

\noindent See \cite{Cartan} and \cite[Thm.~1.8]{Dajczer1990_Submanifolds-and-isometric-immersions} for modern presentations of the proof.  The axiom of planes, which requires that through any three points in a Riemannian manifold passes a totally geodesic surface, was originally introduced by Riemann \cite{riemann1854uber}.

Theorem \ref{thm:Cartan} is not only a significant motivation for our work but also a key step in our proof. We note that
in this context there is no version of \autoref{thm:main} for manifolds of positive curvature or
for general non-compact manifolds, see \autoref{ssec:examples}.
We also remark that there is always
an open dense set of metrics on a manifold for which there are no totally geodesic submanifolds of dimension
greater than $1$ \cite{ElHWilhelm, MurphyWilhelm, Spivak} and \cite{LytchakPetrunin} for an even stronger result for generic metrics.
Numerous variants of Cartan's result characterize other special
metrics in terms of the presence of large families of special submanifolds \cite{survey_tot_geod_submanifolds}.

The second set of motivations for our work is a broad body of finiteness results which show, under various circumstances, that manifolds or varieties contain only finitely many ``special'' submanifolds or subvarieties.
To frame our results in this context,
we restate \autoref{thm:main} in its equivalent contrapositive form:

\begin{theoremintro}
\label{thm:finite}
Let $(M,g)$ be a closed, real-analytic Riemannian manifold with non-constant negative sectional curvature of dimension $n\geq 3$.
Then $M$ contains only finitely many closed totally geodesic immersed hypersurfaces.
\end{theoremintro}

\noindent Results of this kind can be found for example in \Teichmuller dynamics \cite{EskinFilipWright2018_The-algebraic-hull-of-the-Kontsevich-Zorich-cocycle}, in the geometry of symmetric spaces and homogeneous dynamics \cite{BFMS1, BFMS2, BaldiUllmo, LeeOh, MMO1, MMO2, MohammadiMargulis2022_Arithmeticity-of-hyperbolic-3-manifolds-containing-infinitely-many-totally}, and in algebraic geometry and the theory of variations of Hodge structure \cite{BaldiKlinglerUllmo}.  We remark also that the third author obtained a result similar to \autoref{thm:finite} but under strong assumptions on the topology and curvature of $(M,g)$ \cite{Lowe} (see also \cite{cmn22}.)  Studying conditions for finiteness of totally geodesic manifolds in general seems natural in the context of these prior works, see \autoref{ssec:directions_for_future_work_and_conjectures} for a conjecture generalizing \autoref{thm:main} with this context in mind. In Proposition \ref{prop:exactlyk} we construct, for each dimension $n\geq 3$ and each positive integer $k$,  non-hyperbolic negatively curved analytic metrics $g_k$ on a fixed closed manifold $M$ that have exactly $k$ totally geodesic hypersurfaces.

The final motivation and context for our result comes from a growing literature that extends both results and techniques from homogeneous dynamics to more general smooth dynamical settings, see \cite[Section 1]{Fisher-ICM} for a brief discussion of this in the context of the study of invariant measures.  A novel aspect of our proofs in this context are that they are mainly topological and not ergodic theoretic.  This enables us to also prove:

\begin{theoremintro}
\label{thm:unclosed}
Let $n$ be at least $3$, not divisible by $4$ and not equal to $134$. Let $(M,g)$ be a closed, real-analytic Riemannian manifold with negative sectional curvature of dimension $n$. Suppose that $M$ contains a single complete, totally geodesic, immersed hypersurface that is not closed.
Then $M$ is isometric to a hyperbolic manifold.
\end{theoremintro}

\noindent At the moment we can obtain a similar result in all dimensions only by assuming in addition
that $M$ contains one closed totally geodesic immersed hypersurface or by assuming some curvature pinching to use the results of Cekic, Lefeuvre, Moroianu and Semmelmann \cite{CekicLefeuvreMoroianu2023_On-the-ergodicity-of-the-frame-flow-on-even-dimensional-manifolds}.  A general conjecture inspired by this result is also contained in \autoref{ssec:directions_for_future_work_and_conjectures}.

\subsubsection*{Proof Outline}
Our proof is based strongly on the fact that the set of hyperplanes in $TM$ tangent to totally geodesics submanifolds is a closed analytic set. This follows from the fact that the second fundamental form is an analytic function and totally geodesic submanifolds are characterized by vanishing of the second fundamental form, see \autoref{cor:analyticity_of_a_i_z}.  We can view this closed analytic set as living in $HM = \Gr_{n-1}(TM)$ the Grassman bundle of hyperplanes in $TM$. The goal is to show that this analytic set contains an open set, so is all of $HM$, at which point we are done by \autoref{thm:Cartan}.
A main input from the theory of analytic sets is that since this set contains infinitely many totally geodesic
manifolds of dimension $n-1$, it must be of dimension at least $n$.  In homogeneous dynamics one typically enlarges or regularizes an invariant set or measure by proving ``extra invariance'' under more group directions, here we use this jump in dimension from the theory of analytic sets
instead. While the argument is simple, it is also, to the best of our knowledge, novel and introduces a technique that should have other applications.

The rest of the proof involves exploiting this dimension jump. This is done in two steps, the first using the dynamics of the geodesic flow, the next using the dynamics of the frame flow. At the first step we use the dynamics of the geodesic flow to show that each unit tangent vector is contained in some totally geodesic hypersurface, see \autoref{prop:every_tangent_vector_is_contained_in_a_totally_geodesic}. This is done by showing that the set of such vectors that intersect either the unstable or stable manifold in an open set, which then implies they are dense under either the forward or backward flow.

In odd dimensions, the frame flow step follows from a theorem of Brin and Brin--Gromov.
If $UM$ denotes the unit tangent bundle of $M$ and $PM$ denotes the bundle of orthogonal frames over $M$, the main results of \cite{Brin1975_The-topology-of-group-extensions-of-C-systems, BrinGromov1980_On-the-ergodicity-of-frame-flows} imply that a closed invariant set in $PM$ covering all of $UM$ is all of $PM$.

In even dimensions the frame flow step is more novel and requires an analysis of closed invariant sets in $PM$ projecting to all of $UM$.  For this we use that the presence of even one closed immersed totally geodesic hypersurface $N$ in $M$ provides a great deal of transitivity of the frame flow on $PM$ using the Brin--Gromov result showing transitivity of the frame flow on $PN$. While this does not suffice to show transitivity on $PM$, it allows us to make a geometric argument reducing the transitivity on $HM$ to the fact that there are no non-vanishing vector fields on even dimensional spheres, see \autoref{prop:a_totally_geodesic_through_almost_every_hyperplane}.

We note that A.~Zeghib proved in \cite[Thm.~C]{Zeghib_Laminations-et-hypersurfaces-geodesiques-des-varietes1991} the analogue of \autoref{thm:finite} and \autoref{thm:unclosed} when the totally geodesic hypersurfaces are \emph{embedded}, i.e. without self-intersections.
The methods are completely different from ours and based on a lower bound of the volume of a neighborhood of an embedded hypersurface.  Zeghib's results hold in the same form for constant and variable curvature and so are very different in nature from ours which characterize constant curvature.

We also note that the theory of analytic sets has been used on occasion to study fixed point sets in
actions of large groups \cite{Bonatti, FarbShalen}. However, our use of it to study non-fixed sets invariant under flows seems novel.

Finally the techniques used in this paper are more or less entirely disjoint from those used in \cite{BFMS1,BFMS2}.
There is a common underlying idea of equidistribution but both how it is proven and how it is used are entirely different.

\subsubsection*{Acknowledgements} The authors thank Thang Nguyen for useful discussions concerning the
examples in \autoref{ssec:examples}, Yves Coud\`{e}ne for useful conversations about the criteria in \autoref{sssec:recurrence_in_the_noncompact_case}, Thibault  Lefeuvre for useful conversations concerning Brin groups, Michael Larsen for useful conversations concerning spin groups, and Laura DeMarco and Ralf Spatzier for enouraging us to consider effective finiteness and prove \autoref{prop:exactlyk}.
SF was supported by the NSF under grants DMS-2005470 and DMS-2305394. DF was supported by NSF under grant DMS-2246556 and also by Institut Henri Poincaré (UAR 839 CNRS-Sorbonne Université), and LabEx CARMIN (ANR-10-LABX-59-01). BL was supported NSF under grant DMS-2202830.



\section{Preliminaries from dynamics}
	\label{sec:preliminaries_from_dynamics}

\paragraph{Outline of section}
In \autoref{ssec:anosov_flows} we recall standard facts on Anosov flows.
Next, in \autoref{ssec:torsors_and_transitivity_groups} we take the opportunity to revisit some constructions of Brin \cite{Brin1975_The-topology-of-group-extensions-of-C-systems} regarding extensions of Anosov flows by compact groups.
We take the point of view described in \cite[\S2]{EskinFilipWright2018_The-algebraic-hull-of-the-Kontsevich-Zorich-cocycle} and \cite[\S5.2]{Filip_Translation-surfaces:-Dynamics-and-Hodge-theory}.
The picture is controlled by a family of closed subgroups of the compact group giving the extension.
A group is naturally associated to each point on the base of the flow, and for distinct points the groups are isomorphic, but not canonically.


\subsection{Anosov flows}
	\label{ssec:anosov_flows}

\subsubsection{Setup}
	\label{sssec:setup_anosov_flows}
Let $UM$ be a Riemannian manifold, for example the unit tangent bundle of another Riemannian manifold $M$.
A differentiable flow $g_t$ on $UM$ is is called \emph{Anosov} if there exists a continuous pointwise decomposition of the tangent bundle:
\[
	T_x UM = W^s(x) \oplus W^c(x) \oplus W^u(x)
\]
where $\dim W^c(x)=1$ and the vector field giving the flow spans $W^c$, and such that there exist $C,\lambda>0$ with
\begin{align*}
	\norm{g_t v} \leq C e^{-\lambda t}\norm{v}\quad &\forall v\in W^s(x),\forall t\geq 0\\
	\norm{g_{t} v} \leq C e^{\lambda t}\norm{v}\quad &\forall v\in W^u(x),\forall t\leq 0.
\end{align*}
When $M$ is a compact manifold with negative sectional curvature, the geodesic flow on its unit tangent bundle $UM$ is Anosov (\cite[\S17.6]{KatokHasselblatt1995_Introduction-to-the-modern-theory-of-dynamical-systems}).

We restrict to Anosov flows that preserve the volume induced by the Riemannian metric.

\subsubsection{Assumptions for stable/unstable manifolds}
	\label{sssec:assumptions_for_stable_unstable_manifolds}
Suppose that $(UM,g_t)$ is an Anosov flow on a not necessarily compact real-analytic manifold; when $M$ is compact, all that follows is trivially satisfied and the reader unfamiliar with these technical points can skip them.

All that we need for our arguments is the existence of stable and unstable foliations, as described in \autoref{sssec:stable_unstable_manifolds}, together with their standard properties as described below.
See also \cite{BarreiraValls2008_Analytic-invariant-manifolds-for-sequences-of-diffeomorphisms} for a recent treatment which shows that the invariant manifolds are real-analytic when the dynamics is.

We assume that there exists a continuous function $r\colon UM\to \bR_{>0}$ with the following properties:
\begin{description}
	\item[temperedness] There exists $C_1>1$ such that
	\[
		\tfrac 1 {C_1} \cdot r(x) \leq r(g_t x)\leq C_1 \cdot r(x)\quad \forall t\in[-1,1], \forall x\in UM.
	\]
	\item[tameness of dynamics] There exists $C_2>1$ such that denoting by $B(0,r(x))\subset T_x UM$ the ball of radius $r(x)$ in the tangent space, the maps
	\[
		g_t(x,r)\colon B(0,r(x))\xrightarrow{\exp_x} UM \xrightarrow{g_t}
		UM
		\xrightarrow{\exp_{g_t x}^{-1}}B(0,C_2\cdot r(g_tx))
	\]
	are well-defined for all $t\in [-1,1]$ and any $x\in M$.
	Furthermore, we assume that the maps $g_t(x,r)$ have holomorphic extensions $g_t(x,r)_{\bC}$ on the corresponding complexified balls and additionally satisfy the bounds $\norm{g_t(x,r)_{\bC}}_{C^0(B_\bC(0,r(x)/2))}\leq C_3$ for some constant $C_3>0$.
\end{description}
We use the norms from the complexification since, in general, placing a topology or a system of seminorms on real-analytic functions is rather subtle, see \cite[\S2.6]{krantz2002primer}.

\subsubsection{Stable/Unstable manifolds}
	\label{sssec:stable_unstable_manifolds}
With these assumptions, it follows from standard hyperbolic theory, as treated for example in \cite[\S7]{BarreiraPesin2007_Nonuniform-hyperbolicity} or \cite[\S4]{Pesin2004_Lectures-on-partial-hyperbolicity-and-stable-ergodicity}, that we have a foliation of $UM$ by immersed stable manifolds, with the leaf containing $x$ denoted $\cW^s[x]$ and the local manifold $\cW^s_{loc}[x]\subset \cW^s[x]\cap B(x,r(x))$ denoting the connected component containing $x$.

A defining property of stable manifolds is that $y\in \cW^s[x]$ if and only if $\dist(g_t y,g_tx)\xrightarrow{t\to +\infty}0$ (in fact, exponentially fast).
Note also that $T_x \cW^{s}[x]=W^{s}(x)$.

The analogous constructions work for the unstable manifolds $\cW^u[x]$ and reversed time direction.
We will also denote by $\cW^{cs/cu}[x]$ the center-stable (resp. center-unstable) manifolds obtained by applying the flow $g_t$ to the stable manifolds.

\subsubsection{Hopf argument}
	\label{sssec:hopf_argument}
The volume measure preserved by $g_t$ is ergodic, as follows from the Hopf argument, see e.g. \cite[Thm.~9.1.1]{BarreiraPesin2007_Nonuniform-hyperbolicity}.
In particular, by the Birkhoff ergodic theorem the set of points $D\subset UM$ for which both the forward and backward $g_t$-orbits are dense is a set of full measure.

Let $D^{+}\subset UM$ be the set of points for which the forward orbit under $g_t$ is dense in $UM$, and $D^-\subset UM$ defined analogously for the backward orbit.
It follows from the definition that $D^+$ is $\cW^s$-saturated (i.e. $x\in D^+$ and $y\in \cW^s[x]\implies y\in D^+$), and analogously for $D^-$ and $\cW^u$.
Since the sets are also $g_t$-invariant, it follows that they are in fact $\cW^{cs/cu}$-saturated sets of full measure.

In the case of negatively curved metrics, one can also verify density of $D^{+/-}$ from topological transitivity of the geodesic flow.  Transitivity of the flow can be proven geometrically, see e.g. \cite{Eberlein}.

\subsubsection{Center-stable projection}
	\label{sssec:center_stable_projection}
The foliations $\cW^{cs}$ and $\cW^{u}$ are everywhere transverse.
In particular, in any sufficiently small open set $W\ni x$ we have continuous maps
\begin{align*}
	\pi^{cs}_u \colon W &\to \cW^{u}_{loc}[x]\cap W\\
	y &\mapsto \cW^{cs}_{loc}[y]\cap \cW^{u}_{loc}[x]
\end{align*}
A submanifold $N\subset W$ with $\dim N = \dim \cW^u$ will be called \emph{transversal to the $\cW^{cs}$-foliation} if $T_x N\oplus W^{cs}(x)=T_x UM$.
In this case $\pi^{cs}_u$ is a homeomorphism between $N$ and its image (see also \cite[p.~236, Thm.~8.4.3]{BarreiraPesin2007_Nonuniform-hyperbolicity}).

We record the following useful consequence:
\begin{corollary}[Open set contains generic point]
	\label{cor:open_set_contains_forward_dense_point}
	For any $x\in UM$ and open subset $V\subset \cW^{u}_{loc}[x]$, the intersection $V\cap D^+$ is nonempty.
	The analogous statement for $\cW^{s}_{loc}$ and $D^-$ holds.
\end{corollary}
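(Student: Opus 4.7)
The plan is to combine the full-measure property of $D^+$ established via the Hopf argument in \autoref{sssec:hopf_argument} with the local product structure of the transverse foliations $\cW^{cs}$ and $\cW^u$ around $x$. Concretely, I would shrink the open set $W$ from \autoref{sssec:center_stable_projection} so that it is a product chart: every point $z\in W$ is uniquely expressed via transversality as the intersection $\cW^{cs}_{loc}[z_0]\cap \cW^u_{loc}[z_1]$ with $z_0\in \cW^{cs}_{loc}[x]\cap W$ and $z_1\in \cW^u_{loc}[x]\cap W$, giving a homeomorphism between $W$ and an open subset of $(\cW^{cs}_{loc}[x]\cap W)\times(\cW^u_{loc}[x]\cap W)$. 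After shrinking, I may assume $V\subset \cW^u_{loc}[x]\cap W$.

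The key dynamical input is absolute continuity of the center-stable and unstable foliations for Anosov flows (see e.g. \cite[\S7]{BarreiraPesin2007_Nonuniform-hyperbolicity}): the pullback of the Riemannian volume on $W$ under the product chart is mutually absolutely continuous with the product of the induced Riemannian volumes on the transversals, with Radon--Nikodym derivative bounded above and below on compact subsets. In the real-analytic setting this is immediate since by \cite{BarreiraValls2008_Analytic-invariant-manifolds-for-sequences-of-diffeomorphisms} the holonomy maps are themselves analytic.

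With absolute continuity in hand, I would apply Fubini. Since $D^+$ has full Riemannian measure in $UM$, its complement has zero product measure in the chart, so for almost every $y\in V$ (with respect to the induced volume on $\cW^u_{loc}[x]$) the slice $\cW^{cs}_{loc}[y]\cap W$ meets $D^+$ in a set of positive measure, in particular non-emptily. Pick any such $y$; since $D^+$ is $\cW^{cs}$-saturated (as noted in \autoref{sssec:hopf_argument}, because $y\in \cW^s[x]$ implies $\dist(g_t y, g_t x)\to 0$ so $x$ and $y$ share the same forward orbit closure, combined with $g_t$-invariance), the point $y$ itself lies in $D^+$. Thus $V\cap D^+$ is nonempty (and in fact has full measure in $V$). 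The analogous statement for $\cW^s_{loc}$ and $D^-$ follows by applying the above argument to the time-reversed flow $g_{-t}$, which swaps the roles of stable/unstable and of $D^+/D^-$.

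The only step requiring care is the absolute continuity of the center-stable holonomy, but this is a standard feature of Anosov flows and is even sharper in the analytic setting assumed in \autoref{sssec:assumptions_for_stable_unstable_manifolds}; everything else is a direct Fubini plus saturation argument.
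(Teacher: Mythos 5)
Your argument is correct, but it takes a genuinely different (and heavier) route than the paper's. You invoke absolute continuity of the center-stable foliation together with a Fubini argument to conclude that almost every $y\in V$ has a slice meeting $D^+$, and only then apply $\cW^{cs}$-saturation. The paper's proof needs neither absolute continuity nor Fubini: it sets $W:=(\pi^{cs}_u)^{-1}V$, which is open purely by \emph{continuity} of the center-stable projection $\pi^{cs}_u$ from \autoref{sssec:center_stable_projection}; an open set has positive Lebesgue measure and hence contains some $z\in D^+$ (full measure); then $\pi^{cs}_u(z)\in V$ lies on $\cW^{cs}_{loc}[z]$, so $\pi^{cs}_u(z)\in D^+$ by $\cW^{cs}$-saturation. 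In other words, once you decide to use the saturation step (which you do), the Fubini/absolute-continuity detour becomes redundant for the statement as written: a single $D^+$-point in $W$ already suffices. What your machinery buys is the strictly stronger conclusion that $V\cap D^+$ has \emph{full} induced measure in $V$, which is not needed for the corollary and comes at the cost of appealing to a nontrivial theorem (absolute continuity of the foliation). Note also that your parenthetical justification of $\cW^{cs}$-saturation conflates the roles of $x$ and $y$ slightly (the point $y$ you pick lies in $\cW^u_{loc}[x]$, not $\cW^s[x]$); the correct statement is simply that if two points lie on the same stable leaf, their forward orbits are asymptotic, together with $g_t$-invariance, exactly as recorded in \autoref{sssec:hopf_argument}.
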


\begin{proof}
	For the open set $V\subset \cW^{u}_{loc}[x]$, let $W:=(\pi^{cs}_u)^{-1}V$ where the center-stable projection is relative to the point $x$.
	Then $W$ is open, hence has positive Lebesgue measure, hence contains elements of $D^-$.
	Since $D^-$ is $\cW^{cs}$-saturated, it follows that $V$ contains elements of $D^-$.
\end{proof}

\subsubsection{Recurrence and ergodicity in the noncompact case}
	\label{sssec:recurrence_in_the_noncompact_case}
Suppose $UM$ is the unit tangent bundle of a complete, finite volume Riemannian manifold $M$ which is not compact.
Suppose furthermore that all the sectional curvatures of $M$ lie in $\left[-C,\tfrac {-1} C\right]$ for some $C>0$.
A straightforward adaptation of \cite[Lemma~5.13]{FisherLafont_Finiteness-of-maximal-geodesic-submanifolds2021} implies that there exists a compact set $M_{th}\subset M$ such that every totally geodesic hypersurface intersects $M_{th}$.
Indeed, the necessary statement about virtual nilpotency of the fundamental groups of the cusps follows from the Margulis lemma, see e.g. \cite[\S8]{BallmannGromovSchroeder1985_Manifolds-of-nonpositive-curvature}.
The corresponding thick-thin decomposition is constructed in \cite[\S10.5]{BallmannGromovSchroeder1985_Manifolds-of-nonpositive-curvature}.

For our proof, we also need to know the geodesic flow on $UM$ is Anosov and ergodic. In finite volume this follows from requiring both that all the sectional curvatures of $M$ lie in $\left[-C,\tfrac {-1} C\right]$ for some $C>0$ and that all derivatives of the curvature tensor are uniformly bounded.  This is explained by Brin on page $82$ of his exposition of the proof in an appendix to \cite{BallmannLectures}.



\subsection{Torsors and Transitivity Groups}
	\label{ssec:torsors_and_transitivity_groups}

We collect some preliminary results on extensions of Anosov flows by compact groups.
The case of interest to us will be the frame flow, covering the geodesic flow on a unit tangent bundle.

\subsubsection{Setup}
	\label{sssec:setup_torsors_and_transitivity_groups}
Let $K$ be a compact Lie group.
All group actions and maps between manifolds are assumed to be real-analytic; all constructions work equally well within the smooth or continuous class of maps.

\begin{definition}[right $K$-torsor]
	\label{def:right_k_torsor}
	A \emph{right $K$-torsor} is a manifold $T$ equipped with a free and transitive action of $K$ on the right and denoted $(t,k)\mapsto t\cdot k$.

	A map of right $K$-torsors $f\colon T_1\to T_2$ is required to be equivariant for the right action of $K$, and we define:
	\[
		\Aut(T)^K:=\set{f\colon T\to T \text{ s.t. } f(t\cdot k)=f(t)\cdot k}.
	\]
\end{definition}
\begin{remark}[Properties of the groups and torsors]
	\label{rmk:properties_of_the_groups_and_torsors}
	\leavevmode
	\begin{enumerate}
		\item The actions on $T$ by $K$ on the right and by $\Aut(T)^{K}$ on the left commute, by definition.
		\item If we fix a basepoint $t_0\in T$ then we obtain an isomorphism between $T$ and $K$, compatible with the right $K$-action and that induces an identification of $\Aut(T)^K$ and $K$.
	\end{enumerate}
\end{remark}
Fix now a manifold $U$, with a flow $g_t\colon U\to U$.
\begin{definition}[Principal bundle]
	\label{def:principal_bundle}
	A $K$-principal bundle is a fibration $P\to U$ such that $P$ is equipped with a right action of $K$ that is free and transitive on the fibers.

	A $K$-cocycle on $P$ is a lift of the action of $g_t$ to $P$, denoted $G_t$, commuting with the right action of $K$.
\end{definition}

\subsubsection{Reduction of structure group}
	\label{sssec:reduction_of_structure_group}
Suppose $H\subset K$ is a subgroup and $P\to U$ is a $K$-principal bundle.
A \emph{reduction of the structure group to $H$} is a section $\sigma \colon U\to P/H$.
The regularity of the reduction (e.g. continuous, measurable, etc.) will refer to the regularity of $\sigma$.
A reduction of the structure group to $H$ is the same as specifying an $H$-principal subbundle $P_H\subset P$.

When the $K$-principal bundle admits a cocycle $G_t$, a reduction of the cocycle to $H$ means that the section $\sigma$ is $G_t$-invariant.

Note that given an $H$-principal bundle $P'\to U$ we can form the $K$-principal bundle
\[
	P:=P'\times_H K :=\rightquot{P'\times K}H \quad \text{with }(p',k')\cdot h := (p'h,h^{-1}k').
\]
The action of $K$ on $P$ is on the second factor, on the right: $(p',k')\cdot k:= (p',k'\cdot k)$ and it visibly commutes with the $H$-action when viewed on $P'\times K$.


\begin{example}[Framings]
	\label{eg:framings}
	Suppose $V$ is an $n$-dimensional vector space with a nondegenerate positive definite quadratic form and $\bR^n$ is equipped with its standard Euclidean quadratic form.
	Then
	\[
		T:=\Isom(\bR^n,V)=\set{\phi\colon \bR^n\to V \text{ isometrically}}
	\]
	is naturally a right $\Orthog(\bR^n)$-torsor with action by pre-composition, such that $\Aut(T)^K=\Orthog(V)$ with action on the left by post-composition.

	Suppose now that $M$ is an $n$-dimensional Riemannian manifold.
	Then applying the previous construction to each tangent space we obtain the $\Orthog(\bR^n)$-principal bundle $PM\to M$ of frames. 
\end{example}

\subsubsection{Stable manifolds on the principal bundle}
	\label{sssec:stable_manifolds_on_the_principal_bundle}
Suppose next that the flow $g_t$ on $U$ is Anosov, and $P\to U$ is a cocycle.
Then the flow direction and stable/unstable subbundles have lifts $\wtilde{W}^{s/u/c}(p)$ and we have a $G_t$-invariant decomposition
\[
	T_{p}P = T^vP(p) \oplus \wtilde{W}^{s}(p) \oplus \wtilde{W}^u(p)
	\oplus \wtilde{W}^c(p)
\]
where $T^vP\subset TP$ denotes the vertical subbundle (the kernel under the projection $P\to U$), and $\wtilde{W}^c(p)$ is the direction of the vector field generating $G_t$.
Furthermore, the flow $G_t$ on $P$ also admits stable/unstable manifolds $\wtilde{\cW}^{s/u}[p]$ with the same defining properties as in \autoref{sssec:stable_unstable_manifolds}:
$p'\in \wtilde{\cW}^s[p]$ if and only if 
$\dist(G_t p',G_t p)\xrightarrow{t\to +\infty} 0$ and
$T_p \wtilde{\cW}^s[p]=\wtilde{W}^s(p)$.
If $p\in P$ projects to $u\in U$, then $\wtilde{\cW}^s_{loc}[p]$ projects bijectively to $\cW^s_{loc}[u]$.
For the existence and properties of stable/unstable foliations in this context, we refer to \cite[\S4]{Pesin2004_Lectures-on-partial-hyperbolicity-and-stable-ergodicity}.

\subsubsection{Holonomy on Principal Bundles}
	\label{sssec:holonomy_on_principal_bundles}
Denote the fiber of $P$ above $u\in U$ by $P(u)$.
For $u'\in \cW^s_{loc}[u]$ define the \emph{holonomy} map:
\begin{align*}
	\cH^{s}(u,u') \colon P(u)&\to P(u')\\
	p& \mapsto \wtilde{\cW}^s_{loc}[p]\cap P(u')
\end{align*}
Equivalently, $p$ gets mapped to the unique point $p'\in P(u')$ for which $\dist(G_t p, G_t p')\to 0$ as $t\to +\infty$.

The holonomy map is continuous in $u,u'$, since the stable foliation is continuous.
Furthermore for fixed $u,u'$ it is a map of $K$-torsors, since for a fixed $k\in K$ we have
\[
	\dist\left(G_t p, G_t p'\right)\xrightarrow{t\to +\infty} 0	\iff
	\dist\left(G_t p\cdot k, G_t p'\cdot k\right)\xrightarrow{t\to +\infty} 0.
\]
Note also that the holonomy map can be defined for any $u'\in \cW^s[u]$, by applying $g_{t_0}$ for a fixed but sufficiently large $t_0$ such that $g_{t_0} u'\in \cW^s_{loc}[g_{t_0}u]$.

The holonomy map along unstable manifolds $\cH^u$ is defined analogously.
We can similarly define the center-stable and center-unstable holonomies $\cH^{cs},\cH^{cu}$, with the same properties.

\subsubsection{Generic part of a closed set}
	\label{sssec:generic_part_of_a_closed_set}
A point $u_0\in U$ will be called \emph{forward or backward generic} if either its forward or its backward orbit under $g_t$ is dense in $U$.
Recall from \autoref{sssec:hopf_argument} that the sets of these points are denoted $D^{\pm}$.
A point that is both forward and backward generic will be called \emph{two-sided generic}, and a \emph{generic} point will be one that is either forward or backward generic (or possibly both).
For a set $X\subset P$, we will denote by $X(u):=X\cap P(u)$.

Now if $X\subset P$ is a closed set, and $u_0\in U$ is generic, we denote by
\[
	X_{gen,u_0}:= \ov{\bigcup_{t\in \bR} G_t \cdot X(u_0)}
\]
the orbit closure of a generic fiber. 
Note that $X_{gen,u_0}$ is empty if $X(u_0)$ is empty.

\begin{proposition}[Properties of the generic part]
	\label{prop:properties_of_the_generic_part}
	Suppose $X\subset P$ is a closed $G_t$-invariant set, and fix a generic $u_0 \in U$.  
	\leavevmode
	\begin{enumerate}
		\item For any other generic $u_1\in U$ we have that
		\[
			X_{gen,u_0} = X_{gen, u_1}.
		\]
		In particular, the set is independent of the choice of $u_0$, which we omit from notation, and we additionally have:
		\[
			X_{gen} = 
			\ov{\bigcup_{u\in D^+} X(u)}
			=
			\ov{\bigcup_{u\in D^-} X(u)}
			=
			\ov{\bigcup_{u\in D^+\cap D^-} X(u)}
		\]
		i.e. $X_{gen}$ is the closure of all the generic fibers.  We will refer to $X_{gen}$ as the \textit{generic part} of $X$.  
		\item The set $X_{gen}$ is holonomy-invariant, namely for any $u\in U$ and any $u'\in \cW^{s}[u]$ we have that
		\[
			\cH^{s}\left(u,u'\right)X_{gen}(u) = X_{gen}\left(u'\right)
		\]
		and similarly for $\cH^{u}$.
	\end{enumerate}
\end{proposition}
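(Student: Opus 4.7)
My plan is to prove the holonomy invariance in (2) first and then deduce the independence of basepoint in (1) from it. Two preliminary reductions streamline the argument. First, directly from the definition, $X_{gen,u_0}$ is closed, $G_t$-invariant, contained in $X$, and satisfies $X_{gen,u_0}(u_0)=X(u_0)$ (the inclusion $\supseteq$ comes from taking $t=0$ in the union, and $\subseteq$ from $X_{gen,u_0}\subseteq X$); moreover, it is manifestly the smallest closed $G_t$-invariant subset of $P$ containing $X(u_0)$, which will be the key universal property we exploit. Second, the projection $Y:=\pi(X)\subseteq U$ is closed (since $P\to U$ is a fibration with compact fibre $K$, hence proper) and $g_t$-invariant; if some generic $u$ lies in $Y$, the density of its $g_t$-orbit forces $Y=U$, while otherwise $X_{gen,u_0}$ is empty for every generic $u_0$ and the proposition is vacuous. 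I therefore assume $\pi(X)=U$, so $X_{gen,u_0}$ projects onto $U$ and every fibre of $X$ is nonempty.

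For (2), fix $u\in U$, $u'\in\cW^s_{loc}[u]$, and $p\in X_{gen,u_0}(u)$; the goal is to show $\cH^s(u,u')(p)\in X_{gen,u_0}(u')$. Write $p=\lim_n G_{t_n}q_n$ with $q_n\in X(u_0)$ and $g_{t_n}u_0\to u$, and use the forward (or backward) genericity of $u_0$ to arrange $t_n\to+\infty$. Continuity of the stable foliation provides $u'_n\in\cW^s_{loc}[g_{t_n}u_0]$ with $u'_n\to u'$, and continuity of the stable holonomy then gives $p'_n:=\cH^s(g_{t_n}u_0,u'_n)(G_{t_n}q_n)\to\cH^s(u,u')(p)=:p'$. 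It suffices to show $p'_n\in X_{gen,u_0}$, since closedness then yields $p'\in X_{gen,u_0}$. The forward orbits of $p'_n$ and $G_{t_n}q_n$ shadow each other exponentially by the contracting property of $\wtilde{\cW}^s$, so $G_sp'_n$ lies arbitrarily close to $G_{s+t_n}q_n\in\bigcup_r G_rX(u_0)\subseteq X_{gen,u_0}$ as $s\to+\infty$. Moreover $u'_n\in D^+$ since $D^+$ is $\cW^s$-saturated (\autoref{sssec:hopf_argument}), so the forward orbit of $u'_n$ is dense in $U$ and returns to arbitrarily small neighborhoods of $u_0$ infinitely often. A Brin-style recurrence argument along such returns, combined with $G_t$-invariance and closedness of $X_{gen,u_0}$, upgrades the asymptotic shadowing to genuine membership $p'_n\in X_{gen,u_0}$. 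Invariance under $\cH^u$ follows by the symmetric, time-reversed argument using backward genericity.

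Granting (2), set $Z:=X_{gen,u_0}$; this set is closed, $G_t$-invariant, and invariant under both $\cH^s$ and $\cH^u$, with fibre $Z(u_0)=X(u_0)$. For any generic $u_1$, the local product structure of the Anosov flow combined with density of the $u_0$-orbit lets us reach $P(u_1)$ from $P(u_0)$ by a finite composition of $G_t$, $\cH^s$, and $\cH^u$ moves, each preserving $Z$; in the limit this produces $X(u_1)\subseteq Z(u_1)$, and combined with $Z\subseteq X$ gives $Z(u_1)=X(u_1)$. The minimal-set characterization of $X_{gen,u_1}$ then gives $X_{gen,u_1}\subseteq Z=X_{gen,u_0}$, and symmetry yields equality. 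The three equivalent descriptions of $X_{gen}$ follow since $D^+$, $D^-$, and $D^+\cap D^-$ are all dense $g_t$-invariant sets and, by the preceding paragraph, $X_{gen}(u)=X(u)$ for every generic $u$, so the closures of the corresponding unions of fibres all coincide with $X_{gen}$. The main obstacle is the Brin-type recurrence step in (2): the naive shadowing estimate places only the forward $\omega$-limit of $p'_n$ in $X_{gen,u_0}$, and upgrading $\omega$-limit to genuine membership requires exploiting the orbit-closure definition of $X_{gen,u_0}$ together with the recurrence of the (itself generic) orbit of $u'_n$ to exhibit $p'_n$ as an actual limit of points in $\bigcup_r G_rX(u_0)$.
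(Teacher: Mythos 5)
Your plan reverses the paper's order, proving holonomy invariance (2) first and then deducing basepoint-independence (1). Both halves of your argument have a genuine gap, and it is the same gap; the paper resolves it with an argument that does not appear anywhere in your proposal.

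In (2), your shadowing estimate correctly shows $\dist_P(G_s p'_n, X_{gen,u_0}) \to 0$ as $s \to +\infty$, but this does not imply $p'_n \in X_{gen,u_0}$, and the ``Brin-style recurrence'' you invoke does not bridge the gap. Recurrence of $u'_n \in D^+$ gives times $s_k \to +\infty$ with $g_{s_k}u'_n \to u_0$, and then $G_{s_k}p'_n$ accumulates at some $y \in X(u_0) \subseteq X_{gen,u_0}$. But $G_{-s_k}y$ lies over $g_{-s_k}u_0$, not over $u'_n$, so it does not converge back to $p'_n$; and if instead you return near $u'_n$ itself via $g_{s_k}u'_n \to u'_n$, then $G_{s_k}p'_n$ accumulates at some $y'' \in X_{gen,u_0}(u'_n)$ that could differ from $p'_n$ by a nontrivial element of $K$ — nothing rules this out. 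So the passage from ``the forward orbit is asymptotic to a closed $G_t$-invariant set'' to ``the point is in that set'' is a real dynamical obstruction, not a technicality; a closed $G_t$-invariant set need not be stable-saturated. Notice, for comparison, that the paper's proof of (ii) in the two-sided generic case explicitly invokes part (i) twice: once to identify the fibers of $X_{gen}$ with $X(\cdot)$ over generic points, and once to produce the competitor $x_1' \in X(u')$ with $G_{t_i}x_1' \to x''$ that drives the isometry contradiction. Your ordering deprives you of exactly that input.

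In (1), the deduction also glosses over a step of the same flavor. Transporting $Z(u_0)=X(u_0)$ to $P(u_1)$ by flow plus $su$-chain holonomies (each preserving $Z$) shows only that $Z(u_1)$ contains some closed subset arising as a limit of translates of $X(u_0)$; it does not by itself show that this limit fills out all of $X(u_1)$. What you need is the statement that for any $x_1 \in X(u_1)$ there exist $q_i \in X(u_0)$ with $G_{t_i}q_i \to x_1$, and that is precisely what the paper proves via the torsor-coordinate sets $KX_x := \{k \in K : x\cdot k \in X(u)\}$: one gets $KX_{x_0} \subseteq KX_{x_1} \subseteq k^{-1}KX_{x_0}$ essentially for free, and then the monotonicity-plus-compactness trick (a strict inclusion would give a properly nested chain $k^{n+1}KX_{x_0} \subsetneq k^n KX_{x_0}$, but $k^{n_i} \to \id$ by compactness of $K$) forces equality. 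This compactness-of-$K$ argument is the engine of the whole proposition and is absent from your proposal. Without it, or a substitute of comparable strength, both halves of your argument remain incomplete.
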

\begin{proof}
	For part (i), let $u_0,u_1\in U$ be generic.
	For $x\in X(u)$ define:
	\[
		KX_{x}:=\set{k\in K\colon x\cdot k \in X(u)}
	\]
	Note that this is a closed subset of $K$, since the fibers $X(u)$ are closed.
	Fix $x_0\in X(u_0)$.

	Suppose that $g_{t_i}u_0\to u_1$ along a sequence of times $|t_i|\to +\infty$, and passing to a subsequence assume $G_{t_i}x_0\to x_1\in X(u_1)$.
	Then, since the action of $K$ commutes with the flow, we have that
	\[
		G_{t_i}\left(x_0 \cdot k\right) \to x_1\cdot k \text{ so }
		KX_{x_0}\subseteq KX_{x_1}.
	\]
	Applying the same argument with a sequence of times $|t_i'|\to +\infty$ such that $g_{t_i'}u_1\to u_0$, and $G_{t_i'}x_1\to x_0'\in X(u_0)$, we find that $KX_{x_1}\subseteq KX_{x_0'}$.
	If $x_0'=x_0\cdot k$ then $KX_{x_0'}=k^{-1}\cdot KX_{x_0}$ so we conclude that
	\[
		KX_{x_0}\subseteq KX_{x_1}\subseteq k^{-1}\cdot KX_{x_0}.
	\]
	To conclude, it suffices to show that the inclusions are in fact equalities.
	Suppose by contradiction that one of them is not, so $KX_{x_0}\subsetneq k^{-1} KX_{x_0}$, i.e. $k\cdot KX_{x_0}\subsetneq KX_{x_0}$.
	Since these are closed sets, let $k'\in KX_{x_0}$ be such that $\dist(k',k\cdot KX_{x_0})>0$.
	Note that we obtain a sequence of nested sets $k^{n+1}KX_{x_0}\subsetneq k^{n} KX_{x_0}$ for $n\geq 0$.

	Now, since the group $K$ is compact, the sequence $\set{k^{n}}_{n\in \bZ}$ accumulates at the identity element $\id$, so there exists a subsequence with $k^{n_i}\to \id$ with $n_i>0$ (note that if $k^{n_i}\to \id$ then $k^{-n_i}\to \id$ as well).
	Therefore $k^{n_i}\cdot k'\to k'$ which is a contradiction.

	For (ii), holonomy invariance, we prove the statement first under the assumption that $u$ is two-sided generic.
	Since $u'\in \cW^s[u]$, then $u'$ is also forward-generic (since its forward orbit is asymptotic to that of $u$).
	Fix a forward-generic $u''$ and a sequence of times $t_i\to +\infty$ such that $g_{t_i}u\to u''$ (and thus $g_{t_i}u'\to u''$).
	Note that $X$ and $X_{gen}$ have the same fibers over $u,u',u''$.

	Suppose by contradiction that $x'=\cH^s(u,u')x\notin X(u')$.
	Passing to a subsequence, suppose $G_{t_i}x\to x''\in X(u'')$, and since $x'\in \cW^s[x]$ we also have $G_{t_i}x'\to x''$.
	By part (i), there also exists $x_1'\in X(u')$ such that, upon passing to a further subsequence, we also have $G_{t_i}x_1'\to x''$.
	This is a contradiction, since $d_{P(u')}(x',X(u'))>0$ and the cocycle action is by isometries.
	Note that the same argument applies with the roles of $u,u'$ reversed, so the proof is complete when one point is two-sided generic.

	It now suffices to prove (ii) when $u$ is arbitrary, while $u'\in \cW^s_{loc}[u]$ is backward-generic, since $\cH^s(u,u'')=\cH^s(u',u'')\circ \cH^s(u,u')$ and because the set of backward-generic points on $\cW^s[u]$ is nonempty (and in fact dense, see \autoref{sssec:hopf_argument}).
	
	Fix $x\in X(u)$ and let $x':=\cH^s(u,u')x$.
 We will construct a sequence $x_i'\in X_{gen}$ with $x_i'\to x' $.
	Let $u_i\to u$ be a sequence of two-sided generic points converging to $u$.
	Using the local product structure, let now $u_i':=\cW^s_{loc}[u_i]\cap \cW^{cu}_{loc}[u']$ be a sequence of forward-generic points (since they are on the same stable leaf as $u_i$).
 	Note that in fact $u_i'$ are two-sided generic since they are on the same center-unstable leaf as $u' $, and since $u'$ is backward-generic it follows that $u_i'$ is backward-generic.
 
	Then $u_i'\to u'$ since $u_i\to u$.  
	We work in an open neighborhood $\cU$ of $u$ where $P\vert_{\cU}\isom \cU\times K$ as a smooth principal bundle.
	Then, all the holonomy maps are continuous in this trivialization, and depend continuously on their parameters.
	So we have that $\cH^{s}(u_i,u_i')\to \cH^s(u,u')$ as maps of $K$.
	We also know that $\cH^s(u_i,u_i')X(u_i)=X(u_i')$ by the earlier half of the argument.

	By the definition of $X_{gen}$, we know that any $x\in X_{gen}(u)$ is accumulated by some sequence $x_i\in X(u_i)$.
	So
	\[
		\cH^s(u,u')x = \lim \cH^s(u_i,u_i')x_i \in X(u').
	\]
	Conversely, take any $x'\in X(u')$, then $\cH^s(u',u)x'$ is the accumulation point of the sequence
	\[
		\cH^s(u_i',u_i) \cH^{cu}(u',u_i')x'\in X(u_i)
	\]
	since $\cH^{cu}(u',u_i')\to \id$ in the above local trivialization, and the $u_i'$ are two-sided generic.  
	This accumulation point belongs to $X_{gen}(u)$ and this concludes the proof.  
\end{proof}

\subsubsection{$su$-chains}
	\label{sssec:su_chains}
A sequence of points $x_1,\dots, x_n$ will be called an \emph{$su$-chain} if any two consecutive points belong either to the same center-stable manifold, or to the same center-unstable manifold.
Associated to an $su$-chain we obtain a map
\[
	\cH(x_1,\dots,x_n)\colon P(x_1)\to P(x_n)
\]
obtained by composing the holonomy maps between consecutive points.
Note that the map depends on the choice of $su$-chain, not just endpoints.

Denote by $K(x):=\Aut\left(P(x)\right)^K$ the automorphism group of the right $K$-torsor $P(x)$, for $x\in M$.
For an $su$-chain that starts and ends at $x$, the map constructed above belongs to $K(x)$.

\begin{definition}[Brin group]
	\label{def:brin_group}
	For each $x\in U$, define $B(x)\subseteq K(x)$ to be the closure of the group generated by all the maps constructed above, ranging over all $su$-chains that start and end at $x$.
\end{definition}
Note that for any $x,x'\in U$ and a choice of $su$-chain connecting them, the holonomy map gives an identification of the two groups $B(x),B(x')$.

\begin{corollary}[Invariance under transitivity group]
	\label{cor:invariance_under_transitivity_group}
	Suppose that $X\subset P$ is closed and $G_t$-invariant.
	Let $X_{gen}\subset X$ be its generic part, as in \autoref{sssec:generic_part_of_a_closed_set}.

	Then $X_{gen}(u)$ is invariant under $B(u)$.
\end{corollary}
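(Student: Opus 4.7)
The plan is to show that $X_{gen}(u)$ is invariant under each generator of $B(u)$ and then pass to the closure. The key input is \autoref{prop:properties_of_the_generic_part}(ii), which gives invariance under the stable and unstable holonomies $\cH^s, \cH^u$. We must upgrade this to invariance under the center-stable and center-unstable holonomies $\cH^{cs}, \cH^{cu}$, since $su$-chains use consecutive points on $\cW^{cs}$- or $\cW^{cu}$-leaves rather than $\cW^s$- or $\cW^u$-leaves.

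First I would observe that $X_{gen}$ is itself $G_t$-invariant: by definition it is the closure of a union of $G_t$-orbits. Combined with the stable holonomy invariance from \autoref{prop:properties_of_the_generic_part}(ii), this yields center-stable holonomy invariance, because if $u' \in \cW^{cs}[u]$ then $u' = g_{t_0} u''$ for some $u'' \in \cW^s[u]$, and the center-stable holonomy factors as $\cH^{cs}(u,u') = G_{t_0} \circ \cH^s(u,u'')$ (both sides being characterized by the condition $\dist(G_t p, G_t p') \to 0$ after normalizing the center direction). The analogous argument handles $\cH^{cu}$.

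Next, given an $su$-chain $x_1,\dots,x_n$ starting and ending at $u$, the associated map $\cH(x_1,\dots,x_n) \colon P(u) \to P(u)$ is the composition of the holonomies between consecutive pairs, each of which is either a $\cH^{cs}$ or $\cH^{cu}$ map. By the previous step, each factor sends $X_{gen}(x_i)$ onto $X_{gen}(x_{i+1})$, so the composition sends $X_{gen}(u)$ to $X_{gen}(u)$. Hence every element of the group generated by such holonomies preserves $X_{gen}(u)$.

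Finally, since $X_{gen}(u)$ is closed in $P(u)$ and $K(u)$ acts continuously on $P(u)$, the subset of $K(u)$ preserving $X_{gen}(u)$ is closed. Taking closures, the whole Brin group $B(u)$ preserves $X_{gen}(u)$, as desired. There is no real obstacle here once the holonomy invariance of \autoref{prop:properties_of_the_generic_part}(ii) is upgraded from stable/unstable to center-stable/center-unstable, which is the only nontrivial step and is handled cleanly by combining with the $G_t$-invariance of $X_{gen}$.
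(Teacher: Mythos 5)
Your proof is correct and follows the paper's intended route; the paper simply declares the corollary immediate from \autoref{prop:properties_of_the_generic_part} and the definition of $B(u)$. The one small step you spell out — that $\cH^s$-invariance plus $G_t$-invariance of $X_{gen}$ yields $\cH^{cs}$-invariance (via the factorization $\cH^{cs}(u,u') = G_{t_0}\circ \cH^s(u,u'')$ with $u' = g_{t_0}u''$), and likewise for $\cH^{cu}$ — is a genuine gap the paper leaves implicit, since the proposition as stated only covers $\cH^s$ and $\cH^u$ while $su$-chains are built from $\cW^{cs}$- and $\cW^{cu}$-leaves; your fill-in is exactly right.
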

\begin{proof}
	This follows immediate from \autoref{prop:properties_of_the_generic_part} and the definition of $B(u)$.
\end{proof}

\subsubsection{Minimal sets}
	\label{sssec:minimal_sets}
Suppose that $X\subset P$ is closed, $G_t$-invariant, and projects surjectively to $U$.
We will say that $X$ is \emph{relatively minimal} if it is minimal under inclusion among all sets with this property.
Since fibers of $P\to U$ are compact, relatively minimal sets exist as we can intersect a descending chain.

Note that if a set $X$ is relatively minimal, then it coincides with its generic part $X_{gen}$, since $X_{gen}\subseteq X$ is closed and projects surjectively to $U$ if $X$ does.
Additionally, if $x\in X$ projects to $u\in U$ such that $u$ is generic, then $x$ has dense orbit in $X$.

\begin{proposition}[Minimal sets are fiberwise a single $B$-orbit]
	\label{prop:minimal_sets_are_fiberwise_a_single_b_orbit}
	Suppose $X\subset P$ is a relatively minimal set.
	Then for every $u\in U$ the set $X(u)$ is a single $B(u)$-orbit.
\end{proposition}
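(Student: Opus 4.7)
The strategy is to suppose $X(u_0)$ strictly contains a $B(u_0)$-orbit at a chosen generic point $u_0$ and build a proper, closed, $G_t$-invariant subset $Y \subseteq X$ that still projects surjectively to $U$, contradicting minimality. Once the statement is known at one generic $u_0$, the general case follows by picking a forward-generic $u' \in \cW^u_{loc}[u]$ via \autoref{cor:open_set_contains_forward_dense_point} and transporting via $\cH^u(u,u')$: conjugation by $\cH^u(u,u')$ identifies $B(u)$ with $B(u')$, since any $su$-chain at $u$ becomes one at $u'$ after prepending and appending the relevant unstable step.

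By minimality of $X$, we have $X = X_{gen}$ (as noted in \autoref{sssec:minimal_sets}), so \autoref{cor:invariance_under_transitivity_group} gives $B(u) \cdot X(u) = X(u)$ for every $u \in U$. Fix a forward-generic $u_0 \in U$, pick $x_0 \in X(u_0)$, and set $O := B(u_0) \cdot x_0 \subseteq X(u_0)$, a compact set. Define
\[
Y := \ov{\set{\cH(\gamma)(x_0) : \gamma \text{ is an } su\text{-chain starting at } u_0}} \subseteq P,
\]
where $su$-chains are allowed to include $G_t$-moves as instances of $\cH^{cs}$. Then $Y$ is closed and $G_t$-invariant by construction, and $Y \subseteq X$ because $X$ is closed, $G_t$-invariant, and holonomy-invariant by \autoref{prop:properties_of_the_generic_part}. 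Since $u_0$ is forward-generic, $\set{g_t u_0 : t \geq 0}$ is dense in $U$, so $Y$ projects surjectively onto $U$. Minimality then forces $Y = X$, and in particular $Y(u_0) = X(u_0)$.

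The inclusion $O \subseteq Y(u_0)$ holds by taking $\gamma$ to be an $su$-chain from $u_0$ back to $u_0$. For the reverse inclusion, given $y \in Y(u_0)$, write $y = \lim y_n$ with $y_n = \cH(\gamma_n)(x_0)$ and endpoints of $\gamma_n$ tending to $u_0$, call them $u_n \to u_0$. Using local product structure of the foliations, choose short $su$-chains $\delta_n$ from $u_n$ to $u_0$ (say, a $\cW^u_{loc}$-step followed by a $\cW^{cs}_{loc}$-step); in a fixed smooth trivialization of $P$ around $u_0$ the holonomies $\cH(\delta_n)$ converge to the identity. Since $\gamma_n \cdot \delta_n$ is an $su$-chain from $u_0$ to $u_0$, we have $\cH(\gamma_n \cdot \delta_n) \in B(u_0)$, and by compactness of $B(u_0)$ a subsequence converges to some $b \in B(u_0)$ with
\[
b(x_0) = \lim \cH(\gamma_n \cdot \delta_n)(x_0) = \lim \cH(\delta_n)(y_n) = y,
\]
so $y \in O$. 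Combined with the previous paragraph this yields $X(u_0) = O$, completing the generic case. The main technical subtlety is justifying $\cH(\delta_n) \to \id$ in a uniform enough sense to pass to the limit in the displayed equation; this is handled by continuity of the $\cW^{s/u/cs/cu}$ foliations in a local trivialization of $P$, in the same manner as in the proof of \autoref{prop:properties_of_the_generic_part}.
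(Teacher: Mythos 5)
Your proof is correct and uses essentially the same ingredients as the paper's: reduce to a generic base point via holonomy-invariance of $X=X_{gen}$, exploit minimality/density, and close up endpoints of approximating $su$-chains with short holonomies. The paper phrases this as a contradiction between two distinct $B(u)$-orbits separated by a fixed $\varepsilon$, using a quantitative $\varepsilon/10$ estimate, whereas you directly define the holonomy orbit closure $Y$, show $Y=X$ by minimality, and then extract a limiting element of the compact group $B(u_0)$ to conclude $X(u_0)=B(u_0)\cdot x_0$ — a slightly more conceptual packaging of the same argument. One small stylistic mismatch: your opening sentence announces a proof by contradiction, but what you actually execute is a direct identification with no contradiction hypothesis needed; the execution is fine, just adjust the framing.
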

\begin{proof}
	Since $X=X_{gen}$ is invariant under holonomy by \autoref{prop:properties_of_the_generic_part}, it suffices to prove the statement for $u\in U$ in a dense set.
	Pick a $u\in U$ with dense $g_t$-orbit and suppose $X(u)$ contains two distinct $B(u)$-orbits $O_1,O_2\subset X(u)$.
	Fix $\ve>0$ such that $\dist(O_1,O_2)>\ve$ where the distance between the two sets is in the Hausdorff sense.
	It follows in particular that $\forall p_i\in O_i, \forall b\in B(u)$ we have that $\dist(p_2, bp_1)>\ve$.

	Note also that since the holonomy maps are continuous, for any $\ve_1>0$ there exists $\delta_1>0$ such that if $\dist(u,u')<\delta_1$ then there exists an $su$-chain connecting $u,u'$ such that the associated holonomy map $\cH(\bullet)\colon P(u')\to P(u)$ satisfies
	\[
		\dist(p',\cH(\bullet)p')<\ve_1 \quad \forall p'\in P(u').
	\]
	Pick now $p_1\in O_1$, since $X$ is minimal it follows that the orbit of $p_1$ is dense in $X$.
	Consider a sequence of times $t_i$ such that $G_{t_i}p_1\to p_2\in O_2$.
	Clearly we must also have $g_{t_i}u\to u$.
	For $\ve_1:=\ve/10$, there exists $\delta_1>0$ such that, for $i$ sufficiently large, we have that $\dist(G_{t_i}p_1,p_2)<\ve/10$ and there exists an $su$-chain between $g_{t_i}u$ and $u$, with associated holonomy map $\cH(\bullet)$ not moving points by more than $\ve/10$.
	Now $b:=\cH(\bullet)\circ G_{t_i}\in B(u)$ and we have $\dist(p_2,b p_1)<\ve/5$ which is a contradiction.
\end{proof}

\begin{proposition}[Reduction of structure group]
	\label{prop:reduction_of_structure_group}
	Suppose $X\subset P$ is a relatively minimal set.
	\begin{enumerate}
		\item There exists a closed subgroup $H\subset K$ such that $H$ acts freely and transitively (on the right) on the fibers $X(u)$, for any $u\in U$.
		\item For any $u\in U$, after choosing a basepoint $p_0\in P$ the group $B(u)$ is identified with a conjugate of $H$.
	\end{enumerate}
\end{proposition}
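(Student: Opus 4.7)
Our plan is to identify a candidate subgroup $H \subseteq K$ at a single two-sided generic fiber and then propagate its action across all of $U$ via the $G_t$-dynamics, exploiting the description of $X$ as the orbit closure of one generic fiber. Choose $u_0 \in D^+ \cap D^-$ and a basepoint $p_0 \in X(u_0)$, which is possible since $X$ projects onto $U$. The basepoint yields the identification $\phi_{p_0} \colon K(u_0) \xrightarrow{\sim} K$ of \autoref{rmk:properties_of_the_groups_and_torsors}, and we set $H := \phi_{p_0}(B(u_0))$, which is a closed subgroup of $K$. By \autoref{prop:minimal_sets_are_fiberwise_a_single_b_orbit} applied at $u_0$, $X(u_0) = B(u_0) \cdot p_0$, and under $\phi_{p_0}$ this translates into $X(u_0) = p_0 \cdot H$.

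The next step is to upgrade this to right $H$-invariance of the whole set $X$. Since $X$ is relatively minimal it coincides with $X_{gen,u_0} = \overline{\bigcup_t G_t X(u_0)}$, and because $G_t$ commutes with the right $K$-action, $G_t(p_0 \cdot H) = (G_t p_0) \cdot H$ for every $t$. Compactness of $H$ lets closure commute with right multiplication by $H$, so
\[
  X \;=\; \overline{\textstyle\bigcup_t G_t p_0} \cdot H,
\]
and therefore $X \cdot H = X$, making each fiber $X(u)$ right $H$-invariant.

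Finally, we deduce (i) and (ii). Fix $u \in U$ and any $p \in X(u)$; by right $H$-invariance $p \cdot H \subseteq X(u)$. Using $p$ as a basepoint for $P(u)$, the identification $\phi_p$ sends $X(u) = B(u) \cdot p$ to $\phi_p(B(u)) \subseteq K$, so the containment becomes $H \subseteq \phi_p(B(u))$. The main obstacle is to promote this trivial inclusion to equality at an arbitrary $u$, and the key input is that both subgroups are closed subgroups of $K$ abstractly isomorphic to $B(u_0)$ as topological groups---$H$ via $\phi_{p_0}$, and $\phi_p(B(u))$ via the topological group isomorphism $B(u) \simeq B(u_0)$ supplied by any $su$-chain connecting $u$ to $u_0$, composed with $\phi_p$. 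Two such isomorphic closed subgroups of $K$ with one contained in the other must agree in dimension and in number of connected components, so the nested inclusion forces $H = \phi_p(B(u))$, and hence $X(u) = p \cdot H$. Freeness of the $K$-torsor structure on $P(u)$ then makes the right $H$-action on $X(u)$ free and transitive, giving (i). For (ii), the same calculation gives $\phi_p(B(u)) = H$ whenever $p \in X(u)$, and the change-of-basepoint rule $\phi_{p \cdot k}(B(u)) = k^{-1} H k$ shows that $B(u)$ is identified with a conjugate of $H$ for every basepoint in $P(u)$.
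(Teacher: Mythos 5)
Your proof is correct, but it constructs $H$ from the opposite end compared to the paper's. The paper defines $H := \{k \in K : X\cdot k = X\}$, the stabilizer of $X$ under the right $K$-action, observes that relative minimality forces the dichotomy $X\cdot k\cap X \in \{\emptyset, X\}$ (which, together with \autoref{prop:minimal_sets_are_fiberwise_a_single_b_orbit}, yields both freeness and transitivity on each fiber in one stroke), and then identifies $B(u)$ with a conjugate of $H$ via a basepoint. You instead define $H := \phi_{p_0}(B(u_0))$ at a two-sided generic point, propagate right $H$-invariance of $X$ globally using $X = X_{gen,u_0}$ together with compactness of $H$ to pass closure through the right action, and then upgrade the resulting inclusion $H \subseteq \phi_p(B(u))$ to an equality via the compact Lie group fact that isomorphic closed subgroups nested inside one another must coincide. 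Both arguments are sound. The paper's is shorter because the stabilizer definition makes $X\cdot H = X$ tautological, at the cost of a dichotomy that it states but does not fully justify (the "nonempty implies all of $X$" direction implicitly relies on holonomy invariance propagating a single nonempty intersection fiber across all of $U$). Your route makes the global invariance explicit at the price of the Lie-theoretic lemma and the $su$-connectedness of $U$ needed for the $B(u) \cong B(u_0)$ identification; the latter is used implicitly in the paper too, so that is not an extra hypothesis.
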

\begin{proof}
	Since $X$ is relatively minimal, for any $k\in K$ we have that $X\cdot k \cap X$ is either empty or all of $X$.
	Define now
	\[
		H:=\set{k\in K \colon X\cdot k = X}.
	\]
	By construction $H$ acts transitively on any fiber $X(u)$.
	By \autoref{prop:minimal_sets_are_fiberwise_a_single_b_orbit} the group $B(u)$ also acts transitively on $X(u)$.

	Choosing a basepoint $p_0\in P(u)$ identifies $X(u)\toisom x_0 H$ with a right $H$-coset, and the group that acts transitively on it on the left is $x_0 H x_0^{-1}$, which gets identified with $B(u)$.
\end{proof}
\begin{corollary}[Reduction from minimal set]
	\label{cor:reduction_from_minimal_set}
	Any relatively minimal set $X\subset P$ gives a closed subgroup $H\subset K$ and a continuous reduction of the structure group of the cocycle on $P$ to $H$.
\end{corollary}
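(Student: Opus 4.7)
The plan is to exhibit $X$ itself as the $H$-principal subbundle realizing the desired reduction. Proposition \ref{prop:reduction_of_structure_group} already supplies a closed subgroup $H\subset K$ acting freely and transitively on each fiber $X(u)$; what remains is to upgrade this fiberwise structure to a continuous (real-analytic is not claimed here) reduction of the structure group, and to observe that the cocycle preserves it.

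Concretely, I would define the section $\sigma\colon U\to P/H$ by $\sigma(u):=X(u)$, viewed as an element of $P(u)/H$ since $X(u)$ is a single right $H$-coset by \autoref{prop:reduction_of_structure_group}(i). Equivalently, the inclusion $X\hookrightarrow P$ exhibits $X$ as a set-theoretic right $H$-subbundle of $P$. The $G_t$-invariance of $\sigma$ is automatic: since $X$ is $G_t$-invariant and the $H$-action commutes with $G_t$ (being a restriction of the $K$-action), we have $\sigma(g_t u)=G_t\sigma(u)$ for all $t$ and $u$.

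The one genuine check is continuity of $\sigma$. I would argue locally: pick $u_0\in U$, a neighborhood $V\ni u_0$ over which $P$ is trivialized as $P|_V\cong V\times K$, so that $X\cap (V\times K)$ becomes a closed subset of $V\times K$ and $P/H|_V\cong V\times K/H$. If $u_n\to u_0$ in $V$ and $x_n=(u_n,k_n)\in X(u_n)$, then by compactness of $K$ we may extract a convergent subsequence $k_{n_j}\to k^\ast$; closedness of $X$ forces $(u_0,k^\ast)\in X(u_0)$, and since $X(u_0)$ equals a single coset $k_0H$ we conclude $k^\ast\in k_0H$. Because every subsequential limit lies in the same coset, $[k_n]\to [k_0]$ in $K/H$, which gives continuity of the section in the chosen trivialization and hence globally.

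The main (mild) obstacle I anticipate is not in any deep step, but rather in bookkeeping: one must keep straight that $H$ is defined intrinsically as $\{k\in K:X\cdot k=X\}$ while the identification of $B(u)$ with a conjugate of $H$ depends on a choice of basepoint in $P(u)$, and that $\sigma$ is well-defined on the \emph{quotient} $P/H$ precisely because cosets do not depend on such a basepoint. Once this is sorted, $X$ is simultaneously the desired $H$-principal subbundle and a $G_t$-invariant closed subset of $P$, which is exactly the assertion that we have a continuous $G_t$-invariant reduction of the structure group to $H$.
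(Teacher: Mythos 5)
Your proof is correct and follows the same route as the paper's: exhibit $X$ itself as the $H$-subbundle via the section $u\mapsto X(u)\in P(u)/H$ supplied by \autoref{prop:reduction_of_structure_group}, with continuity coming from closedness of $X$ together with compactness of the fibers. The paper leaves the continuity check as a one-line remark; your local-trivialization plus subsequence argument is precisely the expansion of that remark, and your explicit verification of $G_t$-invariance is likewise the content the paper takes for granted.
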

\begin{proof}
	The relatively minimal set $X$ gives a map $[X]\colon U \to P/H$, which is continuous because $X$ is closed and the fibers of the principal bundle are compact.
	This map, or the corresponding $H$-principal subset $X\subset P$, is the desired reduction.
\end{proof}

\begin{proposition}[Closed invariant sets]
	\label{prop:closed_invariant_sets}
	Let $H\subset K$ be a closed subgroup of $K$ arising from a relatively minimal set as in \autoref{cor:reduction_from_minimal_set} and suppose that $K_1\subset K$ is a closed subgroup.
	Denote by $G_t$ the flow on the quotient space $P/K_1$.

	There exists a continuous map
	\[
		cl\colon \rightquot{P}{K_1} \to \leftrightquot{H}{K}{K_1}
	\]
	which is constant on $G_t$-orbits on the left.
	The sets that arise as the generic part of closed $G_t$-invariant subsets of the quotient $P/K_1$ are in bijection with closed sets on the right, by taking the inverse image.
\end{proposition}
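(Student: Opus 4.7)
The plan is to construct $cl$ directly from the $H$-principal subbundle $X \subset P$ given by \autoref{cor:reduction_from_minimal_set}, and then obtain the bijection from the holonomy-invariance of generic parts proved in \autoref{prop:properties_of_the_generic_part} and \autoref{cor:invariance_under_transitivity_group}. For the map itself, given $p \in P$ lying over $u \in U$, I would use the fact that $X(u)$ is a single right $H$-orbit in $P(u)$: any such $p$ admits a decomposition $p = x \cdot k$ with $x \in X(u)$ and $k \in K$, and any two such decompositions are related by $(x,k) \mapsto (xh, h^{-1}k)$ for some $h \in H$. I would then set $cl(pK_1) := HkK_1$; this is well-defined because changing the decomposition replaces $k$ by $h^{-1}k$, and the right $K_1$-action replaces $k$ by $kk_1$, each preserving the double coset. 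Continuity follows from local triviality of $X \to U$, and $G_t$-invariance is automatic: since $G_t$ preserves $X$ and commutes with the right $K$-action, $G_t(x \cdot k) = (G_t x) \cdot k$ leaves the $k$-coordinate unchanged.

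For the bijection, given a closed $F \subset H\backslash K/K_1$ the preimage $cl^{-1}(F)$ is closed and $G_t$-invariant, so one direction is immediate. Conversely, given a closed $G_t$-invariant $Y \subset P/K_1$, I would consider its generic part $Y_{gen}$ (the generic-part construction of \autoref{sssec:generic_part_of_a_closed_set} extends verbatim to cocycle actions on $P/K_1$). The analogue of \autoref{prop:properties_of_the_generic_part}(ii) together with \autoref{cor:invariance_under_transitivity_group} implies that $Y_{gen}(u) \subset P(u)/K_1$ is invariant under the left action of $B(u)$. Under the identification $P(u)/K_1 \cong K/K_1$ induced by a basepoint in $X(u)$, this action becomes left multiplication by $H$, so $Y_{gen}(u)$ is the preimage in $K/K_1$ of some closed $F_u \subset H\backslash K/K_1$. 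The $G_t$-invariance of $cl$ forces $F_{g_t u} = F_u$, and density of a generic orbit, together with continuity of $cl$ and closedness of $Y_{gen}$, force $F_u$ to be independent of $u$. This yields a single closed $F$ with $Y_{gen} = cl^{-1}(F)$, completing the bijection.

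The main obstacle I anticipate is the bookkeeping in the reverse direction: one must transfer the generic-part machinery of \autoref{ssec:torsors_and_transitivity_groups} from $P$ to the quotient $P/K_1$ (this should be routine but needs checking in several places), and then combine the fiberwise $B(u)$-invariance coming from holonomies with the $G_t$-invariance coming from the flow to propagate the double-coset structure across all of $U$. Care is needed to verify that the identifications of the fibers with $K/K_1$ via basepoints in $X(u)$ vary compatibly enough for $F_u$ to be continuous (hence constant) in $u$.
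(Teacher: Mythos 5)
Your proposal is correct and follows the same underlying approach as the paper: the map $cl$ is constructed by decomposing $P$ into the $K$-translates $X\cdot k$ of the relatively minimal set, and these translates are indexed by $H\backslash K$, so $cl$ sends $p = x\cdot k$ to $Hk$, which then descends to $H\backslash K/K_1$. The paper's proof is essentially just this construction in one or two lines (it first reduces to $K_1 = \{\id\}$ and notes $K$-equivariance, rather than working with the quotient directly as you do), and it does not spell out the bijection between generic parts and closed subsets of $H\backslash K/K_1$; your second paragraph supplies that argument correctly by invoking the holonomy-invariance of generic parts from \autoref{prop:properties_of_the_generic_part} and \autoref{cor:invariance_under_transitivity_group}, together with the observation that the left $B(u)$-action on $P(u)/K_1$ becomes left $H$-multiplication on $K/K_1$ once a basepoint in $X(u)$ is chosen. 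The one step you could streamline is the ``$F_u$ independent of $u$'' argument: since $cl$ is well-defined without choosing local trivializations and is both $G_t$-invariant and $su$-holonomy-invariant (holonomy preserves $X$ and commutes with the right $K$-action), you get $F_{g_t u} = F_u$ and $F_{u'} = F_u$ along $su$-chains directly, so a single generic $u_0$ already determines $F$; one then verifies $Y_{gen} = cl^{-1}(F)$ by combining $\pi(Y_{gen}(u_0)) = F$ and $B(u_0)$-saturation of $Y_{gen}(u_0)$.
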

\begin{proof}
	It suffices to treat the case $K_1=\set{\id}$ and ensure that the map $cl$ is $K$-equivariant.
	As in the proof of \autoref{prop:reduction_of_structure_group}, let $X\subset P$ be a relatively minimal set leading to the group $H\subset K$.
	Then the disjoint $K$-translates of the minimal set $X$ are equivariantly identified with $\leftquot{H}{K}$, and this gives the map $cl\colon P\to \leftquot{H}{K}$.
	The map $cl$ is continuous since the section $[X]\colon U\to P/H$ is continuous.
\end{proof}

\begin{definition}[Transitivity group]
	\label{def:transitivity_group}
	A group $H\subset K$ arising from a relatively minimal set as in \autoref{cor:reduction_from_minimal_set} will be called a \emph{transitivity group}.
\end{definition}

\begin{remark}[On transitivity groups]
	\label{rmk:on_transitivity_groups}
	\leavevmode
	\begin{enumerate}
		\item The Brin groups (\autoref{def:brin_group}) and the transitivity group are isomorphic, but are identified only up to conjugation by elements of $K$.
		\item After passing to a finite cover of the base manifold, we can ensure that the transitivity group is connected, see \cite[Lemma~3.3]{CekicLefeuvreMoroianu2023_On-the-ergodicity-of-the-frame-flow-on-even-dimensional-manifolds}.
		We do so in the applications in \autoref{sec:proof_of_the_main_theorem}.
	\end{enumerate}
\end{remark}




\section{Proof of the Main Theorem}
	\label{sec:proof_of_the_main_theorem}


\subsection{Proof outline}
	\label{ssec:proof_outline}

\subsubsection{Diagram of vectors and subspaces}
	\label{sssec:diagram_of_vectors_and_subspaces}
Let $UM\xrightarrow{\pi_1} M$ denote the unit sphere bundle, $HM\xrightarrow{\pi_{n-1}}M$ the Grassmannian bundle of oriented hyperplanes, i.e. $(n-1)$-dimensional tangent planes, and $FM$ the bundle of pairs $(v,\Pi)$ where $v$ is unit vector, $\Pi$ is an oriented hyperplane, and $v\in \Pi$.
Denote the forgetful projections by $FM\xrightarrow{\pi_U}UM$ and $FM\xrightarrow{\pi_H}HM$.

Denote next by $Z\subset HM$ the subset of hyperplanes that are tangent to a germ of totally geodesic hypersurface (with either choice of orientation).
Set now $I:=\pi_{H}^{-1}(Z)\subset FM$ and $A:=\pi_U(I)$, so that $A$ denotes the set of unit tangent vectors through which there exists a totally geodesic hypersurface.

\begin{equation}
	\label{eqn_cd:SM_FM_HM_M}
	\begin{tikzcd}
		&
		FM
		\arrow[dl, "\pi_U", swap]
		\arrow[dr, "\pi_H"]
		&
		\\
		UM
		\arrow[dr, "\pi_1", swap]
		&
		&
		HM
		\arrow[dl, "\pi_{n-1}"]
		\\
		&
		M
		&
	\end{tikzcd}
	\qquad
	\begin{tikzcd}
		&
		I
		\arrow[dl, "\pi_U", swap]
		\arrow[dr, "\pi_H"]
		&
		\\
		A
		&
		&
		Z
		\\
		&
		&
	\end{tikzcd}
\end{equation}
Our notation is summarized in \autoref{eqn_cd:SM_FM_HM_M}.
Typically, a point in $M$ will be denoted $p$, a point in $HM$ will be denoted $(p,\Pi)$ with $\Pi\subset T_pM$, and a point in $UM$ will be denoted $(p,v)$ with $v\in T_p M$.

\subsubsection{Main steps in the proof}
	\label{sssec:main_steps_in_the_proof}
We can now describe our proof with more precision. It is divided into the following steps:
\begin{description}
	\item[Step 1] The set $A$ coincides with $UM$.
	In other words, through every tangent vector there exists at least one totally geodesic hypersurface.
	This is established in \autoref{ssec:geometry_of_real_analytic_sets}, using the geometry of real-analytic sets and the ergodic theory of the geodesic flow.
	\item[Step 2] The set $Z$ is all of $HM$.
	This is established in \autoref{ssec:analyzing_the_transitivity_group}, using the constraints coming from the transitivity group.
	\item[Step 3] We conclude by referring to Cartan's \autoref{thm:Cartan}, see \autoref{sssec:real_hyperbolic_last_step}.
\end{description}



\subsection{Geometry of real-analytic sets}
	\label{ssec:geometry_of_real_analytic_sets}

\subsubsection{Setup}
	\label{sssec:setup_geometry_of_real_analytic_sets}
We keep the notation of \autoref{sssec:main_steps_in_the_proof}.
It is immediate from the definitions that $Z\subset HM$ is a closed subset, since a totally geodesic hypersurface $S\ni p$ with $T_pS = \Pi$ must locally be equal to the exponential map of a ball in $\Pi$, and a hypersurface is totally geodesic if and only if its second fundamental form vanishes identically (see \cite[\S12-13]{ONeill1983_Semi-Riemannian-geometry}).

We will call a set \emph{real-analytic} if it is locally the vanishing locus of finitely many real-analytic functions.
A subset is \emph{subanalytic} if it is locally the image, under a real-analytic map, of a relatively compact real-analytic subset (see \cite[Def.~3.1]{BierstoneMilman1988_Semianalytic-and-subanalytic-sets}).
Many equivalent descriptions of subanalytic sets exist, see \cite[Prop.~3.13]{BierstoneMilman1988_Semianalytic-and-subanalytic-sets}.

As a preliminary, we need:
\begin{lemma}[Fiberwise vanishing and real-analyticity]
	\label{lem:fiberwise_vanishing_and_real_analyticity}
	Suppose $\pi\colon X\to Y$ is a real-analytic fibration of real-analytic manifolds, such that the fibers, denoted $X_y$, are connected real-analytic manifolds.
	Let $\rho\colon X\to \bR$ be a real-analytic function and let $Z\subset Y$ be the set of points on the base such that $\rho$ vanishes identically on the fibers, namely:
	\[
		Z = \set{y\in Y\colon \rho\vert_{X_y}\equiv 0}.
	\]
	Then $Z$ is a real-analytic set.
\end{lemma}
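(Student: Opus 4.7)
The plan is to show $Z$ is real-analytic by a purely local argument at an arbitrary point $y_{0}\in Y$, using three ingredients in turn: local triviality of the fibration, the identity principle on a connected fiber, and Noetherianity/coherence of the sheaf of real-analytic function germs.

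First I would pick any point $x_{0}\in X_{y_{0}}$ and produce a real-analytic product chart: since $\pi$ is a submersion, on a neighborhood of $(y_{0},x_{0})$ there is a real-analytic open embedding $U\times V\hookrightarrow X$, with $U\ni y_{0}$ open in $Y$ and $V\subset \bR^{k}$ (where $k=\dim X_{y}$) a neighborhood of $0$, such that $\{y\}\times V\subset X_{y}$ for every $y\in U$. Pulling $\rho$ back gives a real-analytic $\tilde\rho\colon U\times V\to\bR$. Because $X_{y}$ is a \emph{connected} real-analytic manifold, the identity principle for real-analytic functions implies that $\rho|_{X_{y}}\equiv 0$ if and only if $\rho$ vanishes on any nonempty open subset of $X_{y}$; applied to the slice $\{y\}\times V$, this gives
\[
Z\cap U \;=\; \{\,y\in U : \tilde\rho(y,\cdot)\equiv 0 \text{ on } V\,\}.
\]

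Next I would expand $\tilde\rho$ as a convergent power series in the fiber variable using coordinates on $V$, namely
$\tilde\rho(y,v)=\sum_{\beta}\rho_{\beta}(y)\,v^{\beta}$,
with $\rho_{\beta}(y)=\tfrac{1}{\beta!}\partial_{v}^{\beta}\tilde\rho(y,0)$ a real-analytic function on $U$. Then $\tilde\rho(y,\cdot)\equiv 0$ is equivalent to $\rho_{\beta}(y)=0$ for every multi-index $\beta$, so $Z\cap U$ is cut out by a countable family of real-analytic equations on $U$.

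The last step, which I expect to be the main obstacle, is to replace this countable family by a finite one in a neighborhood of $y_{0}$. The ring of real-analytic function germs $\cO_{Y,y_{0}}$ is Noetherian, so the ideal generated by the stalks $\{(\rho_{\beta})_{y_{0}}\}_{\beta}$ is finitely generated, say by $\rho_{\beta_{1}},\dots,\rho_{\beta_{N}}$. I would then invoke the coherence of the sheaf of real-analytic functions (Cartan's theorem, or complexify everything and use Oka's theorem together with the fact that $\rho$ extends holomorphically to a neighborhood in $Y_{\bC}$) to conclude that on a sufficiently small neighborhood $W\subset U$ of $y_{0}$, each $\rho_{\beta}$ is an $\cO(W)$-linear combination of $\rho_{\beta_{1}},\dots,\rho_{\beta_{N}}$. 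Consequently, on $W$,
\[
Z\cap W \;=\; \{\,y\in W : \rho_{\beta_{1}}(y)=\dots=\rho_{\beta_{N}}(y)=0\,\},
\]
which is real-analytic by definition. Since $y_{0}\in Y$ was arbitrary (the case $y_{0}\notin Z$ being trivial, as some $\rho_{\beta}(y_{0})\ne 0$ gives an open neighborhood disjoint from $Z$), this establishes that $Z$ is locally real-analytic, completing the proof.
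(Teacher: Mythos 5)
Your proof is correct and follows essentially the same route as the paper: reduce to a local product chart via the fibration and connectedness of the fibers, Taylor-expand $\rho$ in the fiber variable so that $Z$ is locally the common zero set of the countably many coefficient functions $\rho_\beta$, and then use a finiteness result to replace infinitely many equations by finitely many. The paper packages this last step as a citation to Narasimhan \cite[Ch.~V, Cor.~2, pg.~100]{Narasimhan1966_Introduction-to-the-theory-of-analytic-spaces} (the common zero set of an arbitrary family of analytic functions is locally the zero set of finitely many of them), whereas you unpack it via Noetherianity of $\cO_{Y,y_0}$ plus coherence; note that your intermediate claim that every $\rho_\beta$ is an $\cO(W)$-linear combination of $\rho_{\beta_1},\dots,\rho_{\beta_N}$ on a single uniform $W$ is a bit stronger than needed---what the standard argument actually delivers, and what suffices, is only that the zero sets $Z\cap W$ and $\{\rho_{\beta_1}=\dots=\rho_{\beta_N}=0\}\cap W$ coincide on some neighborhood $W$.
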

\begin{proof}
	The statement is local (by connectedness of the fibers) so by an appropriate choice of coordinates on $X$ we can assume that $\pi$ is a coordinate projection in a ball of $\bR^{m+n}$ to $\bR^m$.
	Using a multi-index notation with $(\bbx,\bby)\in \bR^m\times \bR^n$, we can expand in Taylor series: $\rho(\bbx,\bby)=\sum_{I}a_I(\bbx)\bby^{I}$ and then $Z$ is given as the vanishing locus of all the $a_I(\bbx)$.
	However, by \cite[Ch.~V, Cor.~2, pg.~100]{Narasimhan1966_Introduction-to-the-theory-of-analytic-spaces} it follows that $Z$ can be given as the vanishing locus of finitely many real-analytic functions in $\bbx$.
\end{proof}

\begin{corollary}[Analyticity of $A, I, Z$]
	\label{cor:analyticity_of_a_i_z}
	With definitions as in \autoref{sssec:diagram_of_vectors_and_subspaces}:
	\begin{enumerate}
		\item The sets $I\subset FM$ and $Z\subset HM$ are real-analytic.
		\item The set $A\subset UM$ is subanalytic.
	\end{enumerate}
\end{corollary}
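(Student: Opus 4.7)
The plan is to apply \autoref{lem:fiberwise_vanishing_and_real_analyticity} by encoding membership in $Z$ as the identical vanishing, along the fibers of an auxiliary real-analytic fibration $X \to HM$, of a real-analytic function built from the second fundamental form.

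Let $\tau \to HM$ denote the tautological $(n-1)$-plane bundle whose fiber over $(p,\Pi)$ is $\Pi \subset T_pM$; this is a real-analytic subbundle of $\pi_{n-1}^{*}TM$. Choose $r>0$ small enough that, uniformly over $M$ (compact by assumption), $\exp_p$ restricts to an embedding on the ball of radius $r$ in $T_pM$, and let $X \subset \tau$ be the open disk subbundle of radius $r$, whose fibers $B(0,r)\subset \Pi$ are connected. For $((p,\Pi),u) \in X$, set $q := \exp_p(u)$ and
\[
\rho\bigl((p,\Pi),u\bigr) \;:=\; \bigl\|\mathrm{II}_{S_{(p,\Pi)}}(q)\bigr\|^{2}, \qquad S_{(p,\Pi)} := \exp_p\bigl(\Pi \cap B(0,r)\bigr),
\]
where $\mathrm{II}$ is the second fundamental form of the hypersurface $S_{(p,\Pi)}$. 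Since the metric is real-analytic, the parametrization $u \mapsto \exp_p(u)$ of $S_{(p,\Pi)}$, its induced tangent frame and unit normal, and the ambient Christoffel symbols all depend jointly real-analytically on $((p,\Pi),u)$, so the standard local-coordinate formula for $\mathrm{II}$ makes $\rho$ a real-analytic function on $X$. Now if some totally geodesic hypersurface $S'$ passes through $p$ with $T_pS' = \Pi$, then $S'$ locally coincides with $S_{(p,\Pi)}$ because $S'$ must contain every geodesic through $p$ in a direction of $\Pi$; hence $(p,\Pi) \in Z$ if and only if $S_{(p,\Pi)}$ is totally geodesic, if and only if $\rho$ vanishes identically on the fiber of $X$ over $(p,\Pi)$ (cf.~\cite[\S12-13]{ONeill1983_Semi-Riemannian-geometry}). \autoref{lem:fiberwise_vanishing_and_real_analyticity} then yields real-analyticity of $Z \subset HM$, and since $\pi_H\colon FM \to HM$ is a real-analytic submersion, $I = \pi_H^{-1}(Z)$ is real-analytic too, which proves~(i).

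For~(ii), the set $A = \pi_U(I)$ is the image of the real-analytic set $I$ under the real-analytic map $\pi_U \colon FM \to UM$. This map is proper, since its fiber over $(p,v)$ is the compact space of oriented hyperplanes of $T_pM$ containing $v$ --- a double cover of $\Gr_{n-2}(v^{\perp})$. Therefore, for any relatively compact open $V \subset UM$, the intersection $I \cap \pi_U^{-1}(\overline V)$ is a relatively compact real-analytic subset of $FM$ whose image under $\pi_U$ covers $A \cap V$. This exhibits $A$ locally as the image of a relatively compact real-analytic set under a real-analytic map, which is the defining property of a subanalytic set.

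The only non-formal input is the joint real-analyticity of $\rho$ in step~(i); this is a routine consequence of the local-coordinate definition of $\mathrm{II}$ once the varying slices $S_{(p,\Pi)}$ are organized as fibers of the single real-analytic family $X \to M$, $((p,\Pi),u) \mapsto \exp_p(u)$. I do not expect any genuine obstacle: the content of the corollary lies precisely in setting up the fibration so that \autoref{lem:fiberwise_vanishing_and_real_analyticity} applies.
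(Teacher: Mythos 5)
Your proof is correct and follows essentially the same strategy as the paper: encode membership in $Z$ as fiberwise vanishing of a real-analytic function over $HM$ and invoke \autoref{lem:fiberwise_vanishing_and_real_analyticity}. One point where your write-up is actually cleaner than the paper's: you define $\rho$ directly on the total space of the disk subbundle by $\rho\big((p,\Pi),u\big)=\big\|\mathrm{II}_{S_{(p,\Pi)}}(\exp_p(u))\big\|^2$, i.e.\ the second fundamental form of the \emph{fixed} hypersurface $S_{(p,\Pi)}$ evaluated at the \emph{moving} point $\exp_p(u)$. The paper instead defines $\rho(p,\Pi)=\|\mathrm{II}_{\exp_p(\Pi)}(p)\|^2$ on $HM$ and then pulls back along $E$; but as literally written this $\rho$ vanishes identically (the second fundamental form of a radially exponentiated hypersurface at its center is always zero, since the radial geodesics lie in it and span the tangent plane), and $E^*\rho$ inherits the same problem. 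Your formulation, which remembers the base point $(p,\Pi)$ and only varies the evaluation point $u$, is what the paper must have intended and is the correct way to make \autoref{lem:fiberwise_vanishing_and_real_analyticity} applicable. You also supply the properness of $\pi_U$ to justify subanalyticity of $A$, which the paper dispatches with a one-line remark; your argument is the standard one and is fine.
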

\begin{proof}
	It suffices to prove that $Z$ is real-analytic, since the corresponding statements for $I,A$ follow from the definitions of the sets.
	
	Let $UHM\to HM$ be the tautological vector bundle whose fiber over the hyperplane $\Pi\in HM$ is the vector space $\Pi$.
	Define the following adaptation of the exponential map
	\begin{align*}
		E\colon UHM& \to HM\\
		\big(p,\Pi,v\big) & \mapsto \big(\exp_p(v), D_v \exp_p(\Pi)\big)
	\end{align*}
	where $\exp_p\colon T_pM\to M$ denotes the exponential map, $p\in M,\Pi\subset T_pM$ and $v\in \Pi$.
	In other words, the map $E$ takes $(p,\Pi,v)$ to a corresponding hyperplane at $\exp_p(v)$.

	Define now $\rho\colon HM\to \bR$ by $\rho(p,\Pi):=\norm{II_{\exp_p(\Pi)}(p)}^2$ where $\exp_p(\Pi)$ denotes the hypersurface obtained by exponentiating $\Pi$, $II$ is its second fundamental form, and thus $\rho(p,\Pi)$ is its norm squared evaluated at $p$.
	By construction $\rho$ and the pullback of $\rho$ along $E$, denoted $U\rho:=E^*\rho \colon UHM\to \bR$, are real-analytic.

	Recall now that a hypersurface is totally geodesic if and only if its second fundamental form vanishes identically.
	Therefore $(p,\Pi)\in Z$ if and only if $E^*\rho(p,\Pi,\bullet)$ vanishes identically on the fiber of $UHM$ over $(p,\Pi)$.
	By \autoref{lem:fiberwise_vanishing_and_real_analyticity} it follows that $Z$ is real-analytic.
\end{proof}

\subsubsection{Structure of real-analytic sets}
	\label{sssec:structure_of_real_analytic_sets}
By \cite[2.11-12]{BierstoneMilman1988_Semianalytic-and-subanalytic-sets} an analytic set $Z$ has a stratification into finitely many locally closed pieces that are analytic submanifolds.
The maximal dimension of a stratum is called the dimension of $Z$.

Suppose now that $M$ has infinitely many closed, immersed, distinct totally geodesic hypersurfaces $\{S_i\}_{i\in \bN}$ as in \autoref{thm:main}.
We have the double covers $\wtilde{S}_i\to S_i$ with $\wtilde{S}_i\subset HM$ given by the two choices of orientation of a tangent plane.
Note that $\wtilde{S}_i\subset HM$ is a closed embedded submanifold.
Now we also have that $\wtilde{S}_i\subset Z$ are disjoint, since a tangent hyperplane determines a unique totally geodesic hypersurface (if it exists).

Since $\dim \wtilde{S}_i=\dim S_i=n-1$ and infinitely many are contained in $Z$, it follows that $\dim Z\geq n$.
Now $\dim I = \dim Z + (n-2)$ and so $\dim I\geq 2n-2$.
Our first goal is to prove:
\begin{proposition}[Every tangent vector is contained in a totally geodesic]
	\label{prop:every_tangent_vector_is_contained_in_a_totally_geodesic}
	We have that $A=UM$, i.e. through every unit tangent vector there exists at least one totally geodesic hypersurface.
\end{proposition}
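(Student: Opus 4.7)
The plan is to exhibit, for some $x\in A$, an open subset $V\subset \cW^{u}_{loc}[x]$ that is contained in $A$. Given such $V$, \autoref{cor:open_set_contains_forward_dense_point} produces a point $y\in V$ whose backward $g_t$-orbit is dense in $UM$; since $A$ is closed in $UM$ and $g_t$-invariant (the geodesic starting tangent to a totally geodesic hypersurface stays in it, and so does its tangent vector), $A$ contains the closure of this orbit, so $A=UM$. The same conclusion is reached with $\cW^s$ in place of $\cW^u$ and the forward orbit in place of the backward one.

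To produce $(x,V)$, I first aim to establish $\dim A\geq 2n-2$. From \autoref{cor:analyticity_of_a_i_z} and \autoref{sssec:structure_of_real_analytic_sets}, $A\subset UM$ is closed subanalytic and $\dim I\geq 2n-2$, so $\dim A\geq 2n-2$ as soon as the generic fiber of $\pi_U\colon I\to A$ is $0$-dimensional. The degenerate case, in which the generic fiber is positive-dimensional, corresponds to a continuous family of totally geodesic hypersurfaces sharing a common tangent vector at a generic point of $A$; there, $A$ contains the full unit sphere of each hyperplane in that family, which forces $A$ to contain an open subset of $UM$ directly, so $A=UM$ by closedness and topological transitivity of $g_t$.

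Assuming $\dim A\geq 2n-2$, let $A^{\mathrm{top}}\subseteq A$ denote the union of top-dimensional strata: an open dense, real-analytic, $g_t$-invariant submanifold of dimension $d\geq 2n-2$. At any $x\in A^{\mathrm{top}}$, the tangent space $T_xA^{\mathrm{top}}$ is $dg_t$-invariant, so by the Anosov spectral-gap argument,
\[
T_xA^{\mathrm{top}}=\bigl(T_xA^{\mathrm{top}}\cap W^s(x)\bigr)\oplus W^c(x)\oplus \bigl(T_xA^{\mathrm{top}}\cap W^u(x)\bigr).
\]
Writing $d^s(x),d^u(x)$ for the dimensions of the outer summands, the relation $d^s+1+d^u=d$ together with $d^s,d^u\leq n-1$ forces, at every $x$, either $d^s(x)=n-1$ or $d^u(x)=n-1$. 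By upper semi-continuity of dimensions of intersections of subspaces, both loci are closed subsets of $A^{\mathrm{top}}$ whose union is all of $A^{\mathrm{top}}$, so each connected component of $A^{\mathrm{top}}$ satisfies one of the two conditions identically. On a component with $d^u\equiv n-1$ one has $W^u\subseteq TA^{\mathrm{top}}$, and by integrability of the real-analytic unstable foliation from \autoref{sssec:stable_unstable_manifolds}, $A^{\mathrm{top}}$ is $\cW^u$-saturated there, giving $\cW^u_{loc}[x]\subseteq A$ for any such $x$; the symmetric case yields a $\cW^s_{loc}[x]\subseteq A$ and finishes via the forward-orbit version of the reduction.

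The main obstacle is the dimension lower bound $\dim A\geq 2n-2$, in particular disposing of the degenerate positive-dimensional-fiber case; the spectral splitting of $T_xA^{\mathrm{top}}$ and the integrability step are standard consequences of the real-analytic Anosov structure, and the pigeonhole/semi-continuity argument on $(d^s,d^u)$ is what converts the dimension bound into genuine leaf-saturation.
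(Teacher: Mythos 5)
Your plan for the non-degenerate case (spectral splitting of $T_xA^{\mathrm{top}}$ forcing $W^u$ or $W^s$ into the tangent space, then leaf-saturation) diverges from the paper's, which stays purely linear-algebraic: with $\dim A=2n-2$ and $W^c(a)\subset T_aA$, the hyperplane $T_aA/W^c(a)$ in $T_aUM/W^c(a)$ is automatically transverse to at least one of the images of $W^s(a),W^u(a)$, and one then picks a germ $N\subset A^\circ$ transverse to $W^{cs}(a)$ and feeds $\pi^{cs}_u(N)$ to \autoref{cor:open_set_contains_forward_dense_point}. Your route carries extra technical weight: $T_xA^{\mathrm{top}}$ is not literally $dg_t$-invariant (only the family $\{T_{g_tx}A^{\mathrm{top}}\}$ is equivariant), so the adapted splitting requires a recurrence argument which you do not supply; and tangency to the merely Hölder-continuous distribution $W^u$ does not imply $\cW^u$-saturation by Frobenius-type uniqueness, so ``$W^u\subseteq TA^{\mathrm{top}}$ implies $\cW^u_{loc}[x]\subseteq A$'' needs its own hyperbolic argument. (The paper sidesteps this entirely: one only needs a transversal to $\cW^{cs}$ inside $A$, not a full unstable leaf.) Also, the two closed loci $\{d^u=n-1\}$ and $\{d^s=n-1\}$ covering $A^{\mathrm{top}}$ does not force each component to lie in one of them; but since at every point at least one holds, the complement of one locus is an open set on which the other holds identically, which suffices — so this is a fixable slip rather than a gap.

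The genuine gap is the degenerate case, which you dismiss in one sentence. If $I_a:=\pi_U^{-1}(a)$ is positive-dimensional for $a=(p,v)$, then indeed $A\cap UM(p)$ contains $\bigcup_t\bigl(\Pi_t\cap\bS^{n-1}\bigr)$, and this can be open in the $(n-1)$-dimensional fiber $UM(p)$. But an open subset of a single fiber is not an open subset of the $(2n-1)$-dimensional manifold $UM$; flowing it only sweeps out an $n$-dimensional set, so ``closedness and topological transitivity'' do not then give $A=UM$. The paper's Case~2 is where the real work happens: one shows (i) the hypersurfaces $H_i$, $i\in I_a$, cover an open neighborhood $U_1$ of $p$ in $M$, and (ii) for $p_1\in U_1\cap H_{i_1}$, the unit vector $\sigma_a(p_1)$ over $p_1$ lying on $\cW^{cs}_{loc}[a]$ is tangent to $H_{i_1}$ (since $H_{i_1}$ is totally geodesic and already carries the forward-asymptotic geodesic from $a$). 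This places the open set $\sigma_a(U_1)$ of the \emph{center-stable leaf} inside $A$, which is the object \autoref{cor:open_set_contains_forward_dense_point} actually requires. Your proposal has no substitute for this step, so as written it does not prove the proposition.
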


\begin{proof}
	Let $A^{\circ}\subset A$ denote the set of smooth points, i.e. it is real-analytic, with $\dim A^{\circ} = \dim A$, and $\dim\left(A\setminus A^{\circ}\right)<\dim A$ see e.g. \cite[Theorem 7.2]{BierstoneMilman1988_Semianalytic-and-subanalytic-sets}.
	Consider the map $\pi_U\colon I \to A$.
	Either $\dim A=\dim I$ (call this Case 1), or there exists a further relatively open subset $A^{\circ\circ}\subset A^{\circ}$ such that the map $\pi_{S}\vert_I$ has fibers of positive-dimension over $A^{\circ\circ}$ (call this Case 2).

	In either case, we will prove that there exists $a\in A$ that has a forward (or backward) $g_t$-orbit which is dense in $UM$, and since $A$ is closed and $g_t$-invariant it follows that $A=UM$.

	\paragraph{Case 1}
	Let $a\in A^{\circ}$ be a smooth point and recall that $\dim A=\dim I\geq 2n-2$.
	Recall that $\dim UM=2n-1$, so if the inequality were strict we would be done.
	Therefore we can assume that $\dim A = 2n-2$, i.e. that $A$ is a hypersurface in $UM$.

	We have that $T_a UM=W^{s}(a)\oplus W^c(a)\oplus W^{u}(a)$ and $W^c(a)\subset T_a A$, since it represents the geodesic flow direction.
	In the quotient $T_a UM/W^c(a)$ the hyperplane $T_a A/W^c(a)$ must be transverse to at least one of the images of $W^{u/s}(a)$, say $W^s(a)$. Therefore the intersection $W^{cs}(a)\cap T_a A$ must have the expected dimension, so we have the equalities
	\[
		\dim \left(W^{cs}(a)\cap T_a A\right)=\dim (W^{cs}(a))-1 = n-1.
	\]
	Pick an $(n-1)$-dimensional germ of smooth real-analytic submanifold $N\subset A^{\circ}$ with $a\in N$ such that $T_a UM = T_a N\oplus W^{cs}(a)$.
	For example, $N$ can be obtained by intersecting $A^{\circ}$ with a linear space of the appropriate dimension in local coordinates.
	We thus have that in a neighborhood of $a$, the manifold $N$ is transverse to the center-stable foliation.

	It follows from the properties of the center-stable projection as defined in \autoref{sssec:center_stable_projection} that there exists a neighborhood $U\subset UM$ of $a$ such that $\pi^{cs}_u(N\cap U)\subset \cW^u_{loc}[a]$ contains a nonempty open set in $\cW^{u}_{loc}[a]$.
	From \autoref{cor:open_set_contains_forward_dense_point} there exists $s=\pi^{cs}_u(n)$, with $n\in N$ which is forward-dense, and since the set of forward-dense points is center-stable saturated it follows that $n\in N \subset A$ is forward-dense.

	\paragraph{Case 2}
	Suppose that $a=(p,v)\in A^{\circ\circ}$ is such that $I_a:=\pi_1^{-1}(a)\subset I$ contains a real-analytic curve; to ease notation, we will continue to denote that real-analytic curve by $I_a$.
	We will work in a sufficiently small neighborhood $U$ of $p\in M$.

	For $i:=(p,v,\Pi)\in I_a$ denote by $H_i\subset U$ the totally geodesic hypersurface corresponding to the hyperplane $\Pi\subset T_pM$.
	We claim that the union $\cup_{i\in I_a} H_i$ contains an open subset $U_1$ of $U$.
	Indeed, this is true at the level of tangent spaces: the image of a $1$-parameter family of linear hyperplanes contains an open set.
	The exponential map is a local diffeomorphism and the claim follows.

	The map $\pi_1\colon \cW^{cs}_{loc}[a]\to M$ is a diffeomorphism onto its image (see for example \cite[\S12.2]{BarreiraPesin2007_Nonuniform-hyperbolicity}).
	Therefore, there exists an inverse map $\sigma_{a}\colon U \to \cW^{cs}_{loc}[a]$ which is again a diffeomorphism onto its image.
	For $p\in U$, the unit tangent vector $\sigma_a(p)$ is characterized by the property that it is the unique vector above $p$ such that $g_t(\sigma_a(p))$ and $g_t a$ remain at bounded distance as $t\to +\infty$.

	Suppose now that $p_1\in U_1$ and $p_1\in H_{i_1}$ for some $i_1\in I_a$.
	Recall that $H_{i_1}$ is a totally geodesic hypersurface that passes through $p_1$ and $p$.
	Since $H_{i_1}$ is totally geodesic, there exists a unique vector $v_1$ above $p$, tangent to $H_{i_1}$, such that $g_t v_1$ and $g_t a$ are at bounded distance as $t\to +\infty$.
	Therefore, $v_1=\sigma_a(p_1)$ and thus $\sigma_{a}(p_1)$ is tangent to $H_{i_1}$.
	We conclude that $\sigma_a(p_1)\in A$ by the construction of $A$.

	It now follows that $A\cap \cW^{cs}_{loc}[a]$ contains the open set $\sigma_a(U_1)$.
	Again, we conclude from \autoref{cor:open_set_contains_forward_dense_point} that $A$ contains a backward-dense point and hence $A=UM$.
\end{proof}





\subsection{Analyzing the transitivity group}
	\label{ssec:analyzing_the_transitivity_group}

As we saw in \autoref{cor:reduction_from_minimal_set}, there exists a continuous reduction of the structure group of the frame bundle to the transitivity group.
Restricting this reduction to the unit sphere over one point of $M$ already yields nontrivial obstructions, first exploited by Brin and Gromov \cite{BrinGromov1980_On-the-ergodicity-of-frame-flows}.
A systematic analysis of known obstructions is given in \cite[\S3.3]{CekicLefeuvreMoroianu2023_On-the-ergodicity-of-the-frame-flow-on-even-dimensional-manifolds}, which we use to extract the needed constraints on the transitivity group.
While ergodicity of the frame flow is not known in the generality we would need, the existing restrictions coupled with additional arguments are sufficient for our purposes.

\subsubsection{The simplest obstruction}
	\label{sssec:the_simplest_obstruction}
Recall that even-dimensional spheres have Euler characteristic $2$, hence do not admit a nowhere vanishing vector field, or even an everywhere defined tangent line-field. This is originally due to Poincar\'{e} in dimension $2$ \cite[Chapter 13]{Poincare} and Brouwer in higher dimensions \cite{Brouwer}.
In other words, if $n\geq 3$ is odd, there does not exist a continuous map
\[
	\beta\colon \bS^{n-1}\to \bS^{n-1}\text{ s.t. }\beta(e)\perp e\quad \forall e\in \bS^{n-1},
\]
and an analogous statement holds if we replace $\bS^{n-1}$ by $\bR\bP^{n-1}$ on the right-hand side.

\subsubsection{Surjectivity of image}
	\label{sssec:surjectivity_of_image}
We also record the following elementary consequence for even $n$.
Suppose $\beta\colon \bS^{n-1}\to \bS^{n-1}$ is continuous, with the property that $\beta(e)\perp e$.
Then $\beta$ is surjective when its image is descended to $\bR\bP^{n-1}:=\bS^{n-1}/\pm 1$.
Indeed, suppose by contradiction that the two vectors $\pm e_n$ are never in the image of $\beta$.
Restricted to $\bS^{n-2}(e_n^{\perp})$ we can decompose $\beta(e)=\beta_1(e) + \beta_2(e)$ with $\beta_2(e)=\alpha(e)e_n$ and $\beta_1(e)\in e_n^{\perp}$.
Now by assumption $\beta_1(e)\neq 0$ and also $\ip{e,\beta_1(e)}=0$, so we can define
\[
	\wtilde{\beta}(e):=\frac{\beta_1(e)}{\norm{\beta_1(e)}}\colon \bS^{n-2}\left(e_n^{\perp}\right)\to \bS^{n-2}\left(e_n^\perp\right)
\]
which has the property that $\wtilde{\beta}(e)\perp e$.
But this is a contradiction to \autoref{sssec:the_simplest_obstruction}	since $n-2$ is even.

\subsubsection{Restrictions on reduction of structure group}
	\label{sssec:restrictions_on_reduction_of_structure_group}
We next recall some more elaborate topological facts from \cite[Proof of Thm.~3.1, Step 1]{CekicLefeuvreMoroianu2023_On-the-ergodicity-of-the-frame-flow-on-even-dimensional-manifolds}.
Consider the principal $\SO_{n-1}(\bR)$-bundle over the $(n-1)$-sphere:
\begin{equation}
	\label{eqn_cd:principal_sphere_bundle}
\begin{tikzcd}
	\SO_{n-1}(\bR)
	\arrow[r, hook, ""]
	&
	\SO_{n}(\bR)
	\arrow[d, ""]
	\\
	&
	\bS^{n-1}
\end{tikzcd}
\end{equation}
Suppose that this principal bundle has a continuous reduction to structure group $H\subseteq \SO_{n-1}(\bR)$, with $H$ connected.
Then there are the following possibilities for $H$, and except for $H=\SO_{n-1}(\bR)$, only these can occur:
\begin{description}
	\item[Typical cases] Suppose $n\neq 7, 8, 134$.
	Then:
	\begin{description}
		\item[$n$ odd] There does not exist a nontrivial reduction.
		\item[$n\equiv 2\mod 4$, or $n=4$] Then $H$ fixes a vector in $\bR^{n-1}$.
		\item[$n\equiv 0\mod 4$] Then $H$ acts reducibly on $\bR^{n-1}$.
	\end{description}
	\item[Exceptional cases] We have:
	\begin{description}
		\item[$n=7$] $H$ must contain $\SU(3)$ and fix a complex structure on $\bR^{n-1}=\bR^6$.
		\item[$n=8$] Either $H$ acts reducibly on $\bR^7$, or $H$ fixes a nonzero $3$-form on $\bR^7$ and $H$ equals either $\SO(3)$ or $G_2$.
		\item[$n=134$] Either $H$ fixes a vector in $\bR^{133}$, or it fixes a nonzero $3$-form and equals $E_7/(\bZ/2)$.
	\end{description}
\end{description}

\subsubsection{Full frame bundle}
	\label{sssec:full_frame_bundle}
We extend \autoref{eqn_cd:SM_FM_HM_M} to include the full frame bundle:
\begin{equation}
	\label{eqn_cd:full_frame_bundle}
	\begin{tikzcd}
		& PM \arrow[d] 	&
		\\
		&
		FM
		\arrow[dl, "\pi_U", swap]
		\arrow[dr, "\pi_H"]
		&
		\\
		UM
		\arrow[dr, "\pi_1", swap]
		&
		&
		HM
		\arrow[dl, "\pi_{n-1}"]
		\\
		&
		M
		&
	\end{tikzcd}
\end{equation}
where the various spaces involved can be described as:
\begin{align}
	\label{eqn:spaces_defined}
	\begin{split}
	PM(x)&:=\set{(e_1,\dots,e_n)\into T_x M\text{ an oriented frame}}\\
	UM(x)&:=\set{e_1\into T_x M\text{ unit vector}}\\
	HM(x)&:=\set{e_n\into T_x M\text{ unit vector}}\\
	FM(x)&:=\set{(e_1,e_n)\into T_x M\text{ a pair of unit vectors with }e_1\perp e_n }
	\end{split}
\end{align}
In particular, the hyperplane associated to $e_n\in HM(x)$ is defined to be $e_n^{\perp}\subset T_x M$.

Observe that $PM\to M$ is a principal $\SO_{n}(\bR)$-bundle over $M$, and a principal $\SO_{n-1}(\bR)$-bundle over $UM$.
Let $H\subset \SO_{n-1}(\bR)$ be a transitivity group of the cocycle $PM\to UM$.
Over a point $x\in M$ we have the diagram:
\begin{equation}
	\label{eqn_cd:principal_sphere_bundle_over_point}
	\begin{tikzcd}
		\SO_{n-1}(\bR)
		\arrow[r, hook, ""]
		&
		PM(x)
		\arrow[d, ""]
		\\
		&
		\bS^{n-1}(T_x M)
	\end{tikzcd}
	\end{equation}
Since the reduction of the structure group to $H$ is continuous, the restrictions from \autoref{sssec:restrictions_on_reduction_of_structure_group} apply.

Because we know that our $M$ contains totally geodesic hypersurfaces, we can further improve the information provided by \autoref{sssec:restrictions_on_reduction_of_structure_group}:

\begin{proposition}[Restrictions on the transitivity group]
	\label{prop:restrictions_on_the_transitivity_group}
	With assumptions as in \autoref{thm:main}, let $H\subset \SO_{n-1}(\bR)$ be the transitivity group of $PM\to UM$, which we assume is connected by \autoref{rmk:on_transitivity_groups}.
	Then we have the following possibilities for $H$ and coset space $C:=\leftrightquot{H}{\SO_{n-1}(\bR)}{\SO_{n-2}(\bR)}$:
	\begin{description}
		\item[$n$ odd] Then only the following possibilities can occur:
		\begin{enumerate}
			\item $H=\SO_{n-1}(\bR)$ and $C=\set{pt}$ (any $n$).
			\item $H=\SU(3)$ and $C=\set{pt}$ (for $n=7$ only).
		\end{enumerate}
		\item[$n$ even] Then we have three possibilities:
		\begin{enumerate}
			\item $H=\SO_{n-1}(\bR)$ and $C=\set{pt}$ (any $n$).
			\item $H=\SO_{n-2}(\bR)$ and $C=[-1,1]$ (any $n$).
			\item $H=\SU(3)$ and $C=[-1,1]$ (for $n=8$ only).
		\end{enumerate}
	\end{description}
	Above, ``any $n$'' means subject to $n\geq 3$ and the specified parity condition.
\end{proposition}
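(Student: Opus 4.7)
The plan is to combine the topological restrictions from \autoref{sssec:restrictions_on_reduction_of_structure_group} (giving an upper bound on $H$) with a lower bound coming from Brin--Gromov applied to a closed totally geodesic hypersurface $N\subset M$. Restricting the continuous reduction of structure group from \autoref{cor:reduction_from_minimal_set} to a fiber of diagram \autoref{eqn_cd:principal_sphere_bundle_over_point} gives a continuous reduction of the principal $\SO_{n-1}(\bR)$-bundle over $S^{n-1}(T_xM)$ to $H$. The criteria of \autoref{sssec:restrictions_on_reduction_of_structure_group} then yield an initial list of candidates: for $n$ odd with $n\neq 7$ only $H=\SO_{n-1}(\bR)$; for $n\equiv 2\bmod 4$ or $n=4$, $H$ fixes a vector so $H\subseteq \SO_{n-2}(\bR)$; for $n\equiv 0\bmod 4$, $H$ acts reducibly on $\bR^{n-1}$; plus the exceptional cases $n=7,8,134$.

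To obtain the matching lower bound, I would transfer Brin group information from $N$ to $M$. Since $N$ is totally geodesic, parallel transport along geodesics of $N$ is the restriction of parallel transport in $M$, so the stable/unstable foliations of $N$ are restrictions of those of $M$, and any $su$-chain in $UN$ is also an $su$-chain in $UM$. The frame bundle $PN$ embeds into $PM|_{UN}$ by extending each orthonormal frame of $N$ by the unit normal $\nu$, realizing $\SO_{n-2}(\bR)\hookrightarrow \SO_{n-1}(\bR)$ as the stabilizer of $\nu$. Under this embedding, the Brin group $B^N(u)\subseteq \SO_{n-2}(\bR)$ at $u\in UN$ embeds into $B^M(u)\subseteq \SO_{n-1}(\bR)$ with image in the stabilizer of $\nu$. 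For $n$ even with $n\neq 8$, $\dim N=n-1$ is odd and different from $7$, so Brin--Gromov gives $B^N(u)=\SO_{n-2}(\bR)$, and hence $B^M(u)\supseteq \mathrm{Stab}_{\SO_{n-1}(\bR)}(\nu)\cong \SO_{n-2}(\bR)$.

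Combining the bounds: the connected closed subgroups $H\subseteq \SO_{n-1}(\bR)$ containing $\SO_{n-2}(\bR)$ are only $\SO_{n-2}(\bR)$ and $\SO_{n-1}(\bR)$, since any orbit of the pole $\nu$ under such $H$ is $\SO_{n-2}(\bR)$-invariant on $S^{n-2}$, and by the isotropy representation of $\SO_{n-2}(\bR)$ at $\nu$ any nontrivial such orbit must be all of $S^{n-2}$. This yields $H\in\{\SO_{n-1}(\bR),\SO_{n-2}(\bR)\}$ for all $n$ even with $n\neq 8$; in particular for $n=134$ the exceptional possibility $H=E_7/(\bZ/2)$ is ruled out by the dimension count $\dim E_7=133<\dim \SO_{132}(\bR)$, so $E_7/(\bZ/2)$ cannot contain $\SO_{132}(\bR)$. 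The coset space $C=H\backslash \SO_{n-1}(\bR)/\SO_{n-2}(\bR)=H\backslash S^{n-2}$ is then computed directly: $C=\{pt\}$ when $H$ acts transitively on $S^{n-2}$ (for $H=\SO_{n-1}(\bR)$, and for $H=\SU(3)\subset\SO(6)$ acting on $S^5$ when $n=7$), and $C=[-1,1]$ when $H$ fixes a pole and acts transitively on the latitudes (for $H=\SO_{n-2}(\bR)$, and for $H=\SU(3)\subset \SO(6)\subset\SO(7)$ when $n=8$).

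The main obstacle is handling the exceptional dimensions $n=7,8$. For $n=7$ odd, Brin--Gromov does not apply to $N$ of even dimension $6$, so ruling out the topologically-allowed intermediate case $H=U(3)$ requires an additional argument, perhaps using that the extra $U(1)$-factor would produce an invariant circle bundle incompatible with the geometric structure induced by the abundant totally geodesic hypersurfaces. For $n=8$, $\dim N=7$ is precisely the Brin--Gromov exceptional dimension, so $B^N(u)$ may reduce to $\SU(3)\subset \SO(6)$ preserving a complex structure on $\bR^6$, which is exactly the origin of the $H=\SU(3)$ possibility in the conclusion and needs to be tracked through the embedding $\SO(6)\hookrightarrow \SO(7)$.
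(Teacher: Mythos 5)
Your proposal takes essentially the same approach as the paper: an upper bound on $H$ from the topological classification in \autoref{sssec:restrictions_on_reduction_of_structure_group} applied to the fiberwise reduction, a lower bound $H_S\subseteq H$ obtained by embedding the Brin group of a closed totally geodesic hypersurface $S$ into that of $M$, and then combining the two. The paper's proof is terser but structurally identical (it applies the same §3.3.3 restrictions to $S$ itself, noting $S$ is odd-dimensional, rather than naming Brin--Gromov, but the content is the same), and your explicit orbit/isotropy argument for why the only connected groups between $\SO_{n-2}(\bR)$ and $\SO_{n-1}(\bR)$ are those two endpoints, and your dimension count for $n=134$, are correct and fill in steps the paper leaves implicit.

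The one thing to correct is the ``obstacle'' you flag at $n=7$: it is not one. The classification gives $\SU(3)\subseteq H\subseteq \mathrm{U}(3)$ there, and you are right that it is not easy to rule out $H=\mathrm{U}(3)$ by this method (indeed, the Brin--Gromov lower bound gives nothing useful since $\dim S=6$ is even). But ruling it out is also unnecessary: $\mathrm{U}(3)\supset\SU(3)$ acts transitively on $\bS^5$, so the double coset space $C$ is a point regardless of whether $H$ is $\SU(3)$ or $\mathrm{U}(3)$. The paper's own proof does not attempt to distinguish these either; it only records that $\SU(3)$ acts transitively on $\bS^5$, hence $C=\set{pt}$. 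What \autoref{prop:a_totally_geodesic_through_almost_every_hyperplane} actually consumes from this proposition is the list of possible coset spaces $C$, not the exact identity of $H$, and the same remark applies to the $n=8$ case. So your plan is complete as written once you drop the speculative extra step about invariant circle bundles.
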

\begin{proof}
	In addition to the results enumerated in \autoref{sssec:restrictions_on_reduction_of_structure_group}, we will also make use of the fact that $M$ contains a closed, immersed, totally geodesic hypersurface $S\subset M$.
	Over $S$ we have the transitivity group $H_S\subset \SO_{n-2}(\bR)$ which satisfies the analogous set of restrictions, but we also have an inclusion $H_S\subset H_M$, since we visibly have such an inclusion at the level of Brin groups, and the Brin groups are isomorphic to the transitivity groups.
	Note that while the transitivity group of $S$ might not be connected, by \autoref{rmk:on_transitivity_groups} we can assume that the transitivity group of $M$ is.

	Suppose first that $n$ is odd.
	If $n\neq 7$ then the results enumerated in \autoref{sssec:restrictions_on_reduction_of_structure_group} imply the claim.
	If $n=7$, we have the possibility that $H=\SU(3)$ but $\SU(3)$ acts transitively on $\bS^5$ so the claim $C=\set{pt}$ follows.

	Suppose next that $n$ is even.
	Because a hypersurface $S$ is necessarily odd-dimensional, we have that $H_S=\SO_{n-2}(\bR)$ except possibly when $n=8$ in which case $H_S=\SU(3)$ is also possible.
	So for all even $n\neq 8$ we have the claim, and since the orbits of $\SO_{n-2}(\bR)$ on $\bS^{n-2}\subset \bR^{n-1}$ are classified by their last coordinate, an element of $[-1,1]$.
	For $n=8$ we also have the possibility that $H=H_S=\SU(3)$.
	But the corresponding action of $\SU(3)$ on $\bS^6$ has orbit space again equal to $[-1,1]$ and given by the value of the last coordinate.
\end{proof}

With the restrictions obtained in the preceding result, we can now establish our main goal:
\begin{proposition}[A totally geodesic through almost every hyperplane]
	\label{prop:a_totally_geodesic_through_almost_every_hyperplane}
	Denote by $Z(x)\subset HM(x)$ the intersection of the set $Z$ defined in \autoref{sssec:diagram_of_vectors_and_subspaces} with a fiber of $HM\to M$.
	Then for all $x\in M$ we have that $\dim Z(x) = \dim HM(x)=n-1$.
\end{proposition}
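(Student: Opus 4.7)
The plan is to apply the transitivity group machinery from \autoref{ssec:torsors_and_transitivity_groups} to the closed $G_t$-invariant set $I = \pi_H^{-1}(Z) \subset FM$, viewing $FM$ as the quotient $PM/\SO_{n-2}(\bR)$. The flow-invariance of $I$ holds because parallel transport along a geodesic tangent to a totally geodesic hypersurface preserves that hypersurface's tangent hyperplane. By \autoref{prop:closed_invariant_sets}, the generic part satisfies $I_{gen} = cl^{-1}(\hat C)$ for a closed subset $\hat C \subseteq C := H \backslash \SO_{n-1}(\bR) / \SO_{n-2}(\bR)$, where $H$ denotes the transitivity group; nonemptiness of $\hat C$ follows from $\pi_U(I) = UM$ provided by \autoref{prop:every_tangent_vector_is_contained_in_a_totally_geodesic}. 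By \autoref{prop:restrictions_on_the_transitivity_group}, $C$ is either a single point or $[-1, 1]$; in the singleton case $I_{gen} = FM$, so $Z = HM$ and the conclusion $\dim Z(x) = n-1$ is immediate.

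In the case $C = [-1, 1]$ (necessarily $n$ even, with $H = \SO_{n-2}(\bR)$ or $\SU(3)$ for $n = 8$), the subgroup $H$ fixes a direction in $\bR^{n-1}$ and the reduction of structure group yields a continuous section $\nu \colon UM \to FM$ with $\nu(x, v) \perp v$ a unit vector, so that $cl(x, v, e) = \langle e, \nu(x, v) \rangle$ and $I_{gen} = \{(x, v, e) : \langle e, \nu(x, v) \rangle \in \hat C\}$. The crucial claim is $\{-1, 1\} \cap \hat C \neq \emptyset$. To establish this, each closed totally geodesic hypersurface $S_i \subset M$ (from the hypothesis of infinitely many) contributes a submanifold $I_{S_i} := \{(p, v, n_{S_i}(p)) : (p, v) \in US_i\} \subset I$ of dimension $2n-3$, where $n_{S_i}$ denotes the unit normal. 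Since $\dim S_i = n-1$ is odd, Brin--Gromov \cite{BrinGromov1980_On-the-ergodicity-of-frame-flows} applied to $S_i$ gives a full $\SO_{n-2}(\bR)$ transitivity group on the frame flow of $PS_i$; this embeds into the global $H$ and, by a dimension count, coincides with $H$ realized as the stabilizer of $n_{S_i}(p)$ in $\SO(v^\perp)$, forcing $\nu(p, v) = \pm n_{S_i}(p)$ by continuity. Consequently, the inclusion $I_{S_i} \subseteq I_{gen}$ is equivalent to $\pm 1 \in \hat C$. Now $I$ is a closed real-analytic subset of the compact manifold $FM$ and thus has only finitely many irreducible components; since the $I_{S_i}$ are infinitely many distinct irreducible analytic submanifolds, a pigeonhole argument places two distinct $I_{S_i}, I_{S_j}$ in a common component $C$ of $I$, and as $\pi_U(I_{S_i}) = US_i \neq US_j = \pi_U(I_{S_j})$, the projection $\pi_U(C)$ must equal all of $UM$, forcing $C \subseteq I_{gen}$, hence $\pm 1 \in \hat C$.

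Once $\pm 1 \in \hat C$ is established, $I_{gen}$ contains the section $(x, v) \mapsto (x, v, \pm \nu(x, v))$ of $FM \to UM$, and so $Z(x) \supseteq \{\pm \nu(x, v) : v \in \bS^{n-1}(T_xM)\}$ for every $x \in M$. The topological input from \autoref{sssec:surjectivity_of_image} (the hairy-ball consequence for $n$ even: any continuous $\beta \colon \bS^{n-1} \to \bS^{n-1}$ with $\beta(v) \perp v$ has image surjecting onto $\bR\bP^{n-1}$) shows this subset has dimension $n-1$, giving $\dim Z(x) = n-1$ as required. The main obstacle is establishing $\pm 1 \in \hat C$, which combines the Brin--Gromov compatibility between $\nu$ and the normals $n_{S_i}$ with the dimension/pigeonhole argument exploiting both the analyticity of $I$ and the abundance of closed totally geodesic hypersurfaces.
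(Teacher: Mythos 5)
Your setup through the transitivity-group machinery, the identification of $C = H\backslash\SO_{n-1}(\bR)/\SO_{n-2}(\bR)$ as a point or $[-1,1]$, and the classifying map $cl(x,v,e) = \langle e, \nu(x,v)\rangle$ all match the paper. Where you diverge is that you try to prove the proposition by establishing that $\{\pm 1\} \cap \hat C \neq \emptyset$ always holds, i.e., that the generic part of $I$ contains the section given by $\pm\nu$. The paper does not attempt this; it instead runs a two-case analysis on $\hat C = cl(I_{gen})$: either $cl(I_{gen}) \subset \{\pm 1\}$ (then surjectivity of $\beta$ onto $\bR\bP^{n-1}$ plus the orientation-reversing involution gives an open set of hyperplanes in $Z$), or $cl(I_{gen})$ contains some $\alpha \in (-1,1)$, in which case the spherical caps $\bS^{n-2}_\alpha(e_1)$ for varying $e_1$, together with an intermediate value argument, again sweep out an open set of hyperplanes. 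The second case is precisely the one your argument is trying to exclude, and the paper never claims it can be excluded.

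The step where your argument breaks is the pigeonhole paragraph. Granting that $I$ has finitely many irreducible components and that infinitely many of the $I_{S_i}$ land in a common component $\mathcal{C}$, the assertion that ``$\pi_U(I_{S_i}) \neq \pi_U(I_{S_j})$ forces $\pi_U(\mathcal{C}) = UM$'' is a non sequitur: containing two distinct hypersurface bundles $US_i \cup US_j$ is far from exhausting $UM$, and the dimension count only gives $\dim\mathcal{C} \geq 2n-2$ versus $\dim FM = 3n-3$. Moreover, even if one knew $\pi_U(\mathcal{C}) = UM$, the further deduction ``$\mathcal{C} \subseteq I_{gen}$'' needs justification: $I_{gen}$ is the closure of the fibers of $I$ over generic base points, and a component $\mathcal{C}$ surjecting onto $UM$ intersects $I_{gen}$ over generic points but could still have strata sitting over non-generic loci that are not contained in $I_{gen}$. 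Since your entire reduction hinges on $\pm 1 \in \hat C$, and since there is no apparent reason (nor does the paper offer one) that $\hat C$ must touch the endpoints of $[-1,1]$, this gap is not cosmetic: as written, the argument does not cover the possibility $\hat C \subset (-1,1)$, which is exactly the paper's Case 2. By contrast, the observation $\nu(p,v) = \pm n_{S_i}(p)$ on $US_i$ (modulo the $n=8$ subtleties with $\SU(3)$) is plausible and interesting, but it only places $\pm 1$ in $cl(I)$, not in $cl(I_{gen})$.
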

\begin{proof}
	From \autoref{eqn_cd:SM_FM_HM_M} it suffices to show that the set $I(x)\subset FM(x)$ has the same dimension as $FM(x)$, since $I=\pi_{H}^{-1}(Z)$.
	Recall also that $FM=PM/\SO_{n-2}(\bR)$ and we have an action of $G_t$ lifting the geodesic flow $g_t$ on $UM$.
	Let $H\subset \SO_{n-1}(\bR)$ be the transitivity group of $PM\to UM$.

	We know that:
	\begin{itemize}
		\item The set $I$ is closed, $G_t$-invariant, and projects surjectively to $UM$, hence by \autoref{prop:closed_invariant_sets} its generic part $I_{gen}$ corresponds to a nonempty closed subset of
		\[
			\leftquot{H}{\bS^{n-2}} = \leftrightquot{H}{\SO_{n-1}(\bR)}{\SO_{n-2}(\bR)}.
		\]
		\item The set $I$ is saturated by the fibers of the projection to $HM$, by definition.
	\end{itemize}
	From \autoref{prop:restrictions_on_the_transitivity_group}, only the cases when $C\neq \set{pt}$ need to be treated, i.e. $C=[-1,1]$ and $n$ is even.


	We next observe that for $n$ even, regardless of whether $n=8$ or not, the datum of the Brin group $B(u)$ gives a family of maps $\beta(x,-)\colon UM(x)\to HM(x)$ with the property that $\beta(x,e_1)\perp e_1$ for all $e_1\in UM(x)$.
	Indeed, the Brin group is contained in the stabilizer of $\beta(x,e_1)$.
	Now the classifying map is given by: 
	\begin{align*}
		cl \colon FM &\to [-1,1]\\
		(x,e_1, e_n) & \mapsto \ip{\beta(x,e_1),e_n}
	\end{align*}
	Observe that in the case $n=8$, we can lift $\beta$ to a higher coset space, but we do not need to do so for the argument.
	Recall that $I_{gen}\subset I$ is the generic part.
	Then there are two possibilities analyzed below:
	\emph{Case 1:} $cl(I_{gen})\subset \set{\pm 1}$ and \emph{Case 2:} $cl(I_{gen})\ni \alpha \in (-1,1)$.
	Fix some $x\in M$ for the rest of the argument and omit it from the notation; we will write $\beta(e_1)$ instead of $\beta(x,e_1)$, and $\bS^{n-1}$ will refer to the unit sphere in $T_x M$.

	We also recall from \autoref{sssec:surjectivity_of_image} that $\beta$ is surjective onto $\bR\bP^{n-1}$.
 Note that the sets $I $ and $Z$ are invariant under the involution which switches the orientation of the hyperplane (i.e. $(e_1,e_n)\mapsto (e_1,-e_n)$).
 Therefore, $I_{gen}$ is also invariant under this involution, since it commutes with the flow $G_t$ and the fiber over a generic point is invariant under this involution.

	\noindent\textbf{Case 1:}
	This means that $\ip{\beta(e_1),e_n}=\pm 1, \forall (e_1,e_n)\in I_{gen}$, i.e. $\beta(e_1)=\pm e_n$ for any pair $(e_1,e_n)\in I_{gen}$.
	Since $\beta$ is surjective onto $\bR\bP^{n-1}$  and $Z$ is invariant under the orientation-reversing involution $e_n\mapsto -e_n$, it follows that $Z$ contains an open set and our needed claim is settled.

	\noindent\textbf{Case 2:}
	Now suppose that $\alpha\in (-1,1)$ is in $cl(I_{gen})$.
	This means that for any $e_1\in \bS^{n-1}$, we have that $I_{gen}$ (and thus $I$) contains the set
	\[
		\set{(e_1,e_n)\colon \ip{\beta(e_1),e_n}=\alpha}
	\]
	For a fixed $e_1$, the set of $e_n$'s that satisfy the above condition form the boundary of a spherical cap, diffeomorphic to $\bS^{n-2}$ and described by the formulas:
	\[
		e_n = \alpha\cdot \beta(e_1) + e_n' \quad \text{with } e_n'\in \beta(e_1)^\perp, \norm{e_n'}^2 = 1-\alpha^2.
	\]
	Let us denote this set by $\bS^{n-2}_{\alpha}(e_1)\subset \bS^{n-1}$.

	Take now two distinct values $\beta(e_1')\neq \beta(e_1'')$.
	Then the intersection
	\[
		L:=
		\set{e_n\colon \ip{\beta(e_1'),e_n}>\alpha}
		\cap
		\set{e_n\colon \ip{\beta(e_1''),e_n}<\alpha}
	\]
	is a nonempty open set, as the intersection of two open spherical caps around $\beta(e_1')$ and $-\beta(e_1'')$.
 Non-emptiness holds because the two spherical caps have measure adding up to the measure of $\bS^{n-1}$, and the only situation in which they are disjoint is when $\beta(e_1')=\beta(e_1'')$.
	For a continuous path $e_1(t)$ with $e_1(0)=e_1',e_1(1)=e_1''$, we have that any element $e_n\in L$ is in some $\bS^{n-2}_{\alpha}(e_1(t))$, since $f(t):=\ip{\beta(e_1(t),e_n)}$ satisfies $f(0)>\alpha,f(1)<\alpha$, so by the intermediate value theorem there exists $t$ such that $f(t)=\alpha$.
	This shows that an open set of hyperplanes belongs to $Z$ and concludes the proof.
\end{proof}



\subsubsection{Last step}
	\label{sssec:real_hyperbolic_last_step}
We know that $Z\subset HM$ is a closed real-analytic set of full dimension, therefore it is in fact all of $HM$.
Indeed, for any point of $z\in Z$, the germ of $Z$ at $z$ denoted $Z_z$ has full dimension, therefore by \cite[Ch.~V, Prop.~1, pg.~91]{Narasimhan1966_Introduction-to-the-theory-of-analytic-spaces} the complexified germ $\wtilde{Z}_z$ is an entire complex neighborhood of $z$, and hence the germ $Z_z$ is an entire real neighborhood of $z$ by the result just cited.
It follows that $Z$ is also open, hence equal to $HM$.

It now follows from ``Cartan's axioms of $k$-planes'' (with $k=n-1$) \autoref{thm:Cartan}, that $M$ has constant sectional curvature.
Since the geodesic flow is Anosov, the constant curvature must be negative.



\subsection{Proof of \autoref{thm:unclosed}}

We briefly describe the modifications required for the proof of \autoref{thm:unclosed}.
The first main point is to show that the set $Z$ must still have dimension $n$ as in the beginning of
\autoref{sssec:structure_of_real_analytic_sets}. Let $N$ be the
non-closed totally geodesic submanifold and consider it's lift $\tilde{N}$ to $HM$.  This
is an embedded submanifold since a tangent hyperplane determines the totally geodesic manifold
as before.  Consider any metric on $HM$ making the projection a Riemannian submersion.  Any point in $\tilde{N}$ has an open neighborhood $U$ in $\tilde{N}$ of radius at least the injectivity radius of $M$.  Thus $\tilde{N}$ is not locally closed and so the stratum of $Z$ containing $\tilde{N}$ must have dimension at least $n$.

It remains to see how we can restrict the transitivity group so as to argue as above.  If  $\dim(M)$ is odd, then the argument is exactly as in \autoref{prop:reduction_of_structure_group}.  If the dimension is even
and the curvature is pinched as in \cite[Theorem 1.2.]{CekicLefeuvreMoroianu2023_On-the-ergodicity-of-the-frame-flow-on-even-dimensional-manifolds} then the proof of that theorem shows $H=SO(n-1)$.
If $n$ is even and not divisible by $4$ and so equals $4k+2$ then following the proof of \cite[Theorem 3.8]{CekicLefeuvreMoroianu2023_On-the-ergodicity-of-the-frame-flow-on-even-dimensional-manifolds}, we see that unless $n=134$, we have $H<SO(n-2)$. By \cite[Theorem 3]{Ozaki} we have that $H$ can only be $SO(n-2), U(n-2)$ or $SU(n-2)$ unless $n=10$ or $n=18$. In all such cases we have that $C$ is $[-1,1]$ and we can argue as above.

In dimension $10$ we can in addition have $H=Spin(7)<SO(8)$.  By work of Adams, the $9$ sphere only admits one non-vanishing vector field and so the resulting $8$ dimensional representation of $Spin(7)$ cannot have invariant vectors. Therefore the representation is the spin representation of $Spin(7)$ \cite{Adams}.  There is a maximal $G_2$ in $Spin(7)$ for which this representation splits off a single trivial representation realizing $Spin(7)/G_2$ as $S^7$, the unit tangent sphere in our $8$ dimensional representation see e.g. the discussion of maximal subgroups in \cite{Bourbaki} and of branching rules in \cite{MckayPatera}.  It follows from this that $C$ is once again $[-1,1]$ and we argue as before.

In dimension $18$, we can in addition have $H=Spin(9)<SO(16)$.  As before, by work of Adams, this is an irreducible representation and so the spin representation of $Spin(9)$.  The restriction of this representation to
$Spin(8)$ is the sum of the two semi-spin representations of $Spin(8)$.  In $Spin(8)$ there are three realizations of $Spin(7)$ coming from triality and for two of these copies of $Spin(7)$ the $16$ dimensional spin representation of $Spin(9)$ splits as a trivial representation, and two irreducibles of dimension $7$ and $8$.  See once again \cite{Bourbaki, MckayPatera}.  This shows that there is a $Spin(7)$ invariant vector in the $16$ dimensional representation of $Spin(9)$ and realizes the orbit of that vector as $S^{15}$.  This is once again exactly what we need to see that $C$ is $[-1,1]$.

Finally if we are in even dimensions and there is also one closed immersed totally geodesic submanifold $N'$ in $M$ then we once again see that the transitivity group is either $SO(n-1)$ or $SO(n-2)$ and
the proof works as before.




\section{Examples and Further Directions}
	\label{sec:examples_and_further_directions}


\subsection{Examples}
	\label{ssec:examples}

This section collects examples to illustrate some key points concerning our results.  The first examples
show that there is no analogue of our theorem in positive curvature even for analytic metrics.  The second collection of examples show that one cannot prove an analogous theorem on a simply connected manifold of negative curvature.  Together these classes of examples emphasize that the role played by dynamical recurrence in our proofs is not just an artifact of the techniques.  Finally we give  examples of closed analytic Riemannian manifolds with fixed topology, non-constant negative curvature, and an arbitrarily large but finite number of closed immersed totally geodesic hypersurfaces.

\subsubsection{Examples in positive curvature}
	\label{sssec:examples_in_positive_curvature}
First we construct examples of positively curved, real-analytic Riemannian manifolds that have infinitely many totally geodesic hypersurfaces, but that are distinct from the round sphere.
Consider the Riemannian metric on $\bS^n$ given by: 
\begin{equation} \label{positivezoll}
	g_{h,\bS^n}:=\left[1+ h(\sin(\phi)\right]^2 d\phi^2 + \cos(\phi)^2g_{0,\bS^{n-1}}
\end{equation}
where $h\colon (-1-\ve,1+\ve)\to \bR$ is an even, real-analytic function that vanishes at $-1,1$.
Note that $h\equiv 0$ yields the standard warped product formula for the round metric, and for $h$ small enough (in the $C^2$-sense) the metric will still have positive sectional curvature.

The metric also admits an order $2$ isometry $\sigma(x,\phi):= (\sigma' x,\phi)$, where $\sigma'\colon \bS^{n-1}\to \bS^{n-1}$ is a reflection fixing a copy of $\bS^{n-2}$.
The fixed point set of $\sigma$ is a totally geodesic hypersurface.
Rotating it in the $\bS^{n-1}$-factor gives infinitely many such hypersurfaces.


\subsubsection{Examples in negative curvature}
	\label{sssec:examples_in_negative_curvature}
The next examples show that there is no local obstruction to a simply connected manifold of non-constant negative curvature having infinitely many totally geodesic hypersurfaces. First, there are many examples of homogeneous variably negatively curved Riemannian manifolds that have infinitely many totally geodesic hypersurfaces. These Riemannian manifolds are isometric to solvable Lie groups equipped with left invariant Riemannian metrics \cite{heintze1974homogeneous}.  Lin--Schmidt  gave examples of simply connected non-homogeneous negatively curved 3-manifolds each tangent vector to which is contained in a totally geodesic surface \cite{ls17}.  In fact, these totally geodesic surfaces can be taken to be isometric to the hyperbolic plane.

It is also possible to construct a negative curvature analogue of the examples of (\ref{positivezoll}) as follows.  Writing the hyperbolic metric on $\mathbb{H}^{n+1}$ in polar coordinates as $\sinh^2(r) g_{\bS^n}(\theta) + dr^2$, for $g_{\bS^n}$ the round metric on $\bS^n$, we can define a new family of metrics by
\[
g_{h,\mathbb{H}^{n+1}} = (\sinh^2 r)  g_{ \varphi(r) h,\bS^n}(\theta)  + dr^2,
\]
for $h$ an even real analytic function as above, $\varphi:\mathbb{R} \rightarrow \mathbb{R}^{\geq 0}$ a smooth monotone non-decreasing function equal to $0$ in some neighborhood of $0$ and $1$ for all large enough $r$, and $g_{h,\bS^n}$ the metric on $\bS^{n}$ defined in \autoref{positivezoll}. For $h\equiv 0$ the $\theta$ variable corresponds to the angular coordinate, and $r$ is equal to the distance to the origin. For any fixed $R>0$, $g_{h,\mathbb{H}^{n+1}}$ can be chosen to be as $C^2$-close as desired to the hyperbolic metric for $r<R$, provided that $h$ was chosen with sufficiently small $C^2$-norm and $\varphi$ was chosen to have small enough first and second derivatives.  The isometries of $g_{\varphi(r)h,\bS^n}$ described above extend to isometries of $g_{h,\mathbb{H}^{n+1}}$ the fixed point sets of which are totally geodesic hypersurfaces.

 Furthermore, provided $g_{h,\bS^n}$ is close enough to the round metric and $\varphi$ is chosen with small enough first and second derivatives, $g_{h,\mathbb{H}^{n+1}}$ will have negative curvature.  One can check this as follows. For points with $r$-coordinate larger than some $R$ depending only on the $C^2$ norm of $h$ and $\varphi$, all of their tangent planes will have negative sectional curvature by the formulas that compute the curvature of warped product metrics see \cite[Lemma~3.5]{FJ} and compare  \cite[Lemma~6.2]{l21}.  The remaining points with $r$ coordinate less than $R$ will then also have negative curvature, provided $g_{h,\bS^n}$ was chosen sufficiently close to the round metric and $\varphi$ was chosen with sufficiently small first and second derivative, as observed in the previous paragraph.




\subsubsection{Closed negatively curved examples with finitely many totally geodesic hypersurfaces}
	\label{sssec:closed_negatively_curved_examples_with_finitely_many_totally_geodesic_hypersurfaces}
It is possible to construct \emph{smooth} Riemannian metrics of negative curvature on a closed hyperbolizable $n$-manifold, $n>2$, that are arbitrarily $C^2$-close to the hyperbolic metric and that contain an arbitrarily large number of totally geodesic hypersurfaces.
This can be done provided that there are infinitely many closed totally geodesic hypersurfaces in the hyperbolic metric. One constructs such metrics $g_{\epsilon}$ by choosing a finite union of closed totally geodesic hypersurfaces in the hyperbolic (constant sectional curvature -1) metric, and then perturbing the metric on a small ball disjoint from the hypersurfaces in the finite union.  We remark that it is sometimes even possible to construct such examples while keeping the sectional curvature bounded above by $-1$ and the totally geodesic hypersurfaces hyperbolic (constant sectional curvature -1 in their induced metric), see \cite[Section 6]{Lowe}.

We can construct analytic metrics $g_{\epsilon}$ as in the previous paragraph as follows.  Suppose that $M$ contains infinitely many totally geodesic hypersurfaces in its hyperbolic metric each of which lifts to a finite cover where it is the fixed point set of a finite group of isometries of that finite cover. All first type arithmetic hyperbolic manifolds have infinitely many totally geodesic hypersurfaces of this kind, see \cite{belolipetsky2021subspace}. If we want $k$ totally geodesic hypersurfaces, we choose $g_{\epsilon}$ as above by modifying $g$ away from the $k$ totally geodesic hypersurfaces. We then run Ricci flow for a short time on the metrics $g_{\epsilon}$ of the previous paragraph to obtain metrics $g_{\epsilon}(t)$.

Note that $g_{\epsilon}(t)$ is analytic for $t>0$ by the main result of \cite{Bando}.  Also, since the Ricci flow preserves isometries, and since the fixed point set of a group of isometries is totally geodesic, by pulling back the Ricci flow to the finite covers described in the previous paragraph one can check that each of the $k$ totally geodesic hypersurfaces in $g_{\epsilon}$ is homotopic to a nearby totally geodesic hypersurface in $g_{\epsilon}(t)$.

In fact, we can modify the above construction to give analytic metrics with \textit{exactly} k totally geodesic hypersurfaces. We write this as a proposition.

\begin{proposition} \label{prop:exactlyk}
For each positive integer $k$ there are closed analytic Riemannian manifolds $M_k$ of non-constant negative curvature that contain exactly $k$  closed maximal totally geodesic hypersurfaces. Moreover, the $M_k$ can be chosen to be diffeomorphic.
\end{proposition}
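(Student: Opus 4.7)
The paragraphs immediately preceding the statement already construct, on a fixed closed hyperbolizable $n$-manifold $M$, analytic non-constantly-curved metrics $g_{\epsilon}(t)$ carrying at least $k$ closed totally geodesic hypersurfaces $S_1,\dots,S_k$, each realized as the fixed-point set of an isometric involution $\iota_i$ on a suitable finite cover and hence preserved under Ricci flow. Applying \autoref{thm:finite} to $g_{\epsilon}(t)$, which is analytic, negatively curved, and not of constant curvature, gives that $g_{\epsilon}(t)$ has only finitely many closed totally geodesic hypersurfaces; call them $S_1,\dots,S_k,T_1,\dots,T_m$. The plan is to remove the extras $T_j$ while preserving the $S_i$.

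\textbf{Killing extras and reanalyticizing.} Since a hypersurface is totally geodesic if and only if its second fundamental form vanishes identically, any $C^2$-small metric perturbation supported near a single interior point $p_j$ of $T_j$ that makes the second fundamental form nonzero at $p_j$ destroys $T_j$ as a totally geodesic hypersurface. I will pick points $p_j\in T_j\setminus \bigl(\bigcup_i S_i\cup \bigcup_{j'\ne j}T_{j'}\bigr)$, possible by dimension, together with pairwise disjoint small balls $B_j$ around $p_j$ that are disjoint from the $S_i$ and symmetric under each $\iota_i$. The next step is to perturb $g_\epsilon(t)$ inside $\bigcup_j B_j$ by a $\iota_i$-symmetric, $C^2$-small bump destroying the vanishing of the second fundamental form of $T_j$ at $p_j$, and then run Ricci flow for a short time to restore analyticity. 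By \cite{Bando} and the isometry-preservation property of Ricci flow already used in the preceding construction, the resulting metric $g'$ will be analytic, negatively curved, will keep $S_1,\dots,S_k$ totally geodesic, and will have none of $T_1,\dots,T_m$ totally geodesic.

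\textbf{Iteration and main obstacle.} Applying \autoref{thm:finite} again to $g'$, the set of its closed totally geodesic hypersurfaces is finite, say $S_1,\dots,S_k,T'_1,\dots,T'_{m'}$, with possibly $m'>0$ new ones arising as unintended by-products of the perturbation. The plan is then to iterate the elimination. The main obstacle is showing the iteration terminates after finitely many rounds. The cleanest route is a Baire-category argument inside the space $\mathcal{A}$ of analytic negatively curved metrics on $M$ invariant under the involutions $\iota_i$: inside $\mathcal{A}$ the subset of metrics admitting a closed totally geodesic hypersurface outside $\{S_1,\dots,S_k\}$ is a countable union, indexed by homotopy classes of candidate immersed hypersurfaces, of closed nowhere-dense subsets, nowhere-denseness being the content of the local destruction argument above. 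A generic analytic metric in $\mathcal{A}$ arbitrarily close to $g_\epsilon(t)$ then has precisely $S_1,\dots,S_k$ as its set of closed maximal totally geodesic hypersurfaces, proving the proposition; diffeomorphism of the $M_k$ is automatic, since all constructions live on the fixed smooth manifold $M$.
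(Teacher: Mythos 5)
Your strategy is genuinely different from the paper's, and the difference hides a real gap. The paper never iterates: it perturbs $g_\epsilon$ \emph{once}, on the entire complement $M\setminus U$ of a neighborhood $U$ of $\Sigma_1\cup\dots\cup\Sigma_k$, using the $C^2$-open pointwise condition on the curvature tensor from \cite{MurphyWilhelm,ElHWilhelm} that forbids any totally geodesic hypersurface from passing through a given point. Because that condition is open, it survives a short Ricci flow, so every closed totally geodesic hypersurface of the resulting analytic metric is trapped in a slightly larger neighborhood $U'$; a quantitative Grassmannian-proximity argument then shows that normal projection onto one of the $\Sigma_i$ is a covering map, giving exactly $k$ maximal totally geodesic hypersurfaces in a single step.

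Your proposal instead destroys only the finitely many spurious $T_j$ at isolated points and must then argue the iteration terminates. You correctly identify this as the crux, but the Baire-category sketch does not close it. The space $\mathcal{A}$ of real-analytic (equivariant, negatively curved) metrics is not a complete metric space in any of the usual topologies --- uniform limits of real-analytic functions need not be real-analytic --- so the Baire category theorem does not apply to it directly, and working instead in a $C^\infty$ or $C^r$ completion reintroduces the problem of restoring analyticity at the end. Even setting that aside, the assertion that the locus of metrics carrying a spurious closed totally geodesic hypersurface is a countable union of \emph{closed} nowhere-dense sets is left unproved: closedness would require a compactness theorem for sequences of totally geodesic hypersurfaces under $C^2$-convergence of metrics, and countability of the index set requires knowing that closed totally geodesic hypersurfaces realize only countably many conjugacy classes of subgroups of $\pi_1(M)$; neither point is argued. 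There is also a smaller but genuine imprecision: the $\iota_i$ act on finite covers, not on $M$, so ``balls symmetric under each $\iota_i$'' is not a condition on $M$ itself; the correct requirement is that the perturbation's support avoid each $S_i$, so that its pullback to the relevant cover is automatically compatible with $\iota_i$. The paper's one-shot, open-condition construction avoids every one of these pitfalls, which is precisely why it does not take the Baire-category route.
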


\begin{proof}
For one of the metrics $g_{\epsilon}$ as above, we can perturb $g_{\epsilon}$ away from a small neighborhood $U$ of the k totally geodesic hypersurfaces so that no totally geodesic hypersurface in $(M,g_{\epsilon})$ passes through a point of $M-U$. This is possible by using the deformations considered by (\cite{MurphyWilhelm}, \cite{ElHWilhelm}), which show that a condition on the curvature tensor at point, that is open on the set of Riemannian metrics in the $C^2$-topology, prevents a totally geodesic hypersurface from passing through that point.

Call this perturbation $g_{\epsilon}'$.  Then, running Ricci flow on $g_{\epsilon}'$ for a short time, we obtain metrics  $g_{\epsilon}'(t)$ that have $k$ totally geodesic hypersurfaces homotopic to the $\Sigma_1,..,\Sigma_k$ as above. If we chose $t$ small enough, it will be the case that no point of $M-U'$ has a totally geodesic hypersurface in the metric $g_{\epsilon}'(t)$ passing through it, for $U'$ a slightly larger open neighborhood than $U$ of the union of the $\Sigma_i$. Here we are  using the fact that the condition on the curvature tensor that (\cite{MurphyWilhelm}, \cite{ElHWilhelm}) used to check that the perturbed metrics they constructed contained no totally geodesic hypersurfaces was open in the $C^2$-topology on the space of Riemannian metrics. Thus, any  closed totally geodesic hypersurface $\Sigma$ in $(M,g_{\epsilon}'(t))$ must be contained in $U'$.  Each tangent hyperplane to $\Sigma$ must then be at a distance of at most $\delta$ in the Grassman bundle $Gr_{n-1}(M,g_{\epsilon}(t)')$ from a tangent hyperplane to one of the $\Sigma_i$, where $\delta$ can be made as small as desired making $U$, $U'$, and $t$ small enough.  If $\delta$ is chosen small enough, one can then show that normal projection from $\Sigma$ to one of the $\Sigma_i$ defines a covering map.This shows that $(M,g_{\epsilon}'(t))$ contains exactly $k$ closed maximal totally geodesic hypersurfaces.
\end{proof}

It seems possible that one could also do a similar construction in the context of the examples of Gromov and Thurston \cite{GromovThurston}  and build metrics with at least $k$ totally geodesic submanifolds on negatively curved manifolds which do not admit metrics of constant negative curvature.

Finally we recall that given a manifold $M$ and an embedded hypersurface $N$ it is well known that one can define a metric $g$ on $M$ in which $N$ is totally geodesic and which contains infinitely many totally geodesic submanifolds diffeomorphic to $N$.  This is done by taking a product metric on a tubular neighborhood of $N$ and extending smoothly to get a metric on $M$.  Clearly the same can be done for any finite collection of disjoint embedded submanifolds.
We do not know if there is a general analytic version of this construction.



\subsection{Directions for Future Work and Conjectures}
	\label{ssec:directions_for_future_work_and_conjectures}

As in \cite{BFMS1, BFMS2} given a manifold $M$, we say that an immersed totally geodesic submanifold
$N$ in $M$ is \emph{maximal} if it is not contained in another proper closed immersed totally geodesic submanifold.  It is then natural to conjecture the following generalization of \autoref{thm:main}:

\begin{conjecture}
\label{conj:tggen}
	Let $(M,g)$ be a closed Riemannian manifold with negative sectional curvature, of dimension $n\geq 3$.
	Suppose that $M$ contains infinitely many maximal closed totally geodesic immersed submanifolds of dimension at least $2$. Then $M$ is a locally symmetric space of rank $1$.
\end{conjecture}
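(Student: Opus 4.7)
\emph{Plan of proof for \autoref{conj:tggen}.}

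The plan is to attempt an extension of the three-step strategy used for \autoref{thm:main}: analyticity combined with a dimension jump, geodesic-flow dynamics forcing a closed set to equal all of $UM$, a frame-flow analysis of the transitivity group, and a concluding Cartan-type characterization. First, by the pigeonhole principle one reduces to the case where $M$ contains infinitely many closed, totally geodesic, immersed, maximal submanifolds of a fixed dimension $k$ with $2 \le k \le n-1$; the case $k=n-1$ is already \autoref{thm:main}. Since the conjecture is stated without an analyticity hypothesis, one must either work in lower regularity (replacing real-analytic arguments by transversality or Sard-theoretic substitutes) or first reduce to the analytic case via short-time Ricci flow, along the lines of \autoref{prop:exactlyk}. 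Assuming analyticity, define $Z_k \subset \Gr_k(TM)$ as the closed real-analytic subset of $k$-planes tangent to a germ of a totally geodesic $k$-submanifold; analyticity follows from vanishing of the second fundamental form exactly as in \autoref{cor:analyticity_of_a_i_z}.

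A first difficulty that already appears is that the direct real-analytic dimension jump now only yields $\dim Z_k \geq k+1$, which is substantially weaker than the bound $\dim Z \geq n$ exploited in the hypersurface case. I would try to iteratively strengthen this by coupling the analytic argument with the frame-flow step: the set $Z_k$ is $G_t$-invariant in $\Gr_k(TM) = PM/(\SO_k(\bR) \times \SO_{n-k}(\bR))$, and by \autoref{prop:closed_invariant_sets} its generic part corresponds to a closed subset $\Sigma$ of the double coset space $H \backslash \SO_n(\bR)/(\SO_k(\bR) \times \SO_{n-k}(\bR))$, where $H$ is the transitivity group. A lower bound on $\dim Z_k$ translates into a lower bound on $\dim \Sigma$, which in turn constrains $H$ to lie in a short list of closed subgroups of $\SO_{n-1}(\bR)$. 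A geodesic-flow argument in the style of \autoref{prop:every_tangent_vector_is_contained_in_a_totally_geodesic}, using the center-stable projection and \autoref{cor:open_set_contains_forward_dense_point}, should then upgrade ``through some unit tangent vector lies a totally geodesic $k$-submanifold'' to ``through every unit tangent vector lies one,'' and ultimately force $Z_k$ to contain every $k$-plane of a fixed isotropy type under $H$.

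The frame-flow step is where the most substantial new analysis is required. The key task is to refine the topological restrictions of \autoref{sssec:restrictions_on_reduction_of_structure_group}, which were designed for reductions of the sphere bundle $\SO_n(\bR)\to \bS^{n-1}$, into obstructions on reductions of the associated Grassmannian bundle $\SO_n(\bR)/(\SO_k(\bR) \times \SO_{n-k}(\bR)) \to \bS^{n-1}$ that are compatible with a continuous choice of totally geodesic $k$-plane distributions. One expects precisely the isotropy representations of the negatively curved rank-one symmetric spaces (real, complex, quaternionic and octonionic hyperbolic spaces) to emerge as the admissible groups~$H$.

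The main obstacle, and what I expect to be the heart of a proof, is this refined transitivity-group analysis together with the matching characterization theorem. Firstly, one must upgrade the key topological obstruction used in the hypersurface case (nonexistence of a nowhere-vanishing vector field on an even-dimensional sphere, together with the elementary surjectivity argument of \autoref{sssec:surjectivity_of_image}) into obstructions that extract a complex, quaternionic or octonionic structure on $TM$ from the fibration of $\Gr_k(T_xM)$ by tangent planes of totally geodesic submanifolds; this amounts to a non-homogeneous analogue of the classification of compact symmetric spaces of rank one. Secondly, granted such an auxiliary structure, the final conclusion requires a version of Cartan's \autoref{thm:Cartan} adapted to each rank-one symmetric space, asserting that a Riemannian manifold with a sufficiently rich family of totally geodesic $k$-submanifolds modelled on those of a fixed rank-one symmetric space is itself locally symmetric. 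Partial results of this nature are surveyed in \cite{survey_tot_geod_submanifolds}, but completing them to cover every case produced by the frame-flow step is the second significant hurdle.
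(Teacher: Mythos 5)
This statement appears in the paper as an open \emph{conjecture} in \autoref{ssec:directions_for_future_work_and_conjectures}; the paper does not prove it, and indeed the whole point of stating it is that the methods of \autoref{thm:main} do not obviously extend to higher codimension. So there is no ``paper's own proof'' to compare against, and your text is rightly phrased as a plan rather than a proof. That said, your sketch does correctly identify the structure of the hypersurface argument and the natural points at which to try to generalize it, so let me focus on where the plan, as written, would break down.

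The most serious gap is in the geodesic-flow step (your proposed analogue of \autoref{prop:every_tangent_vector_is_contained_in_a_totally_geodesic}). In the hypersurface case, the crucial chain of inequalities is $\dim Z \geq n$, hence $\dim I = \dim Z + (n-2) \geq 2n-2 = \dim UM - 1$, and it is precisely this ``codimension at most one in $UM$'' bound that lets the argument split into Case~1 (where the image in $UM$ is a hypersurface transverse to $\cW^{cs}$ or $\cW^{cu}$) and Case~2 (where a fiber of $\pi_U$ is positive-dimensional). For general $k$ the analytic dimension jump only gives $\dim Z_k \geq k+1$, and the fiber of $\pi_U \colon I_k \to A_k$ over a unit vector is $\Gr_{k-1}(\bR^{n-1})$, of dimension $(k-1)(n-k)$, so the corresponding bound on $\dim I_k$ is nowhere near $2n-2$ once $k$ is small relative to $n$. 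Your plan acknowledges this is ``substantially weaker'' but the suggested remedy — iterating with the frame-flow constraints to bootstrap the dimension — is not an argument; one would need a genuinely new mechanism to force $A_k = UM$, and none is offered.

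A second gap is the regularity reduction. You propose running Ricci flow to reach an analytic metric and then arguing there. But the construction in \autoref{prop:exactlyk} relies essentially on the fact that the finitely many chosen hypersurfaces are (after passing to finite covers) fixed-point sets of isometries, which Ricci flow preserves. For an arbitrary $C^2$ negatively curved metric with infinitely many maximal totally geodesic submanifolds there is no reason the submanifolds arise from symmetries, so there is no mechanism to guarantee they persist under the flow, even for a short time. This is why the paper explicitly leaves the regularity of the conjecture unspecified and notes only that it ``seems possible'' to work in $C^2$.

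Finally, the frame-flow classification for reductions of the Grassmannian bundle $\SO_n(\bR)/(\SO_k(\bR)\times\SO_{n-k}(\bR))\to \bS^{n-1}$, and the Cartan-type characterization of the non-real rank-one symmetric spaces by families of totally geodesic $k$-submanifolds, are both substantial open sub-problems, not existing lemmas you can cite; your last paragraph candidly says as much. As a roadmap your text is a sensible account of why the conjecture is plausible and where the difficulty lies; as a proof it is incomplete precisely at the steps the paper's authors regard as the genuine obstacles.
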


\noindent If the conjecture is true, it then follows from the results of \cite{BFMS1, BFMS2, Corlette, GromovSchoen} that $M$ is arithmetic as in \autoref{thm:main}.  We do not specify the regularity in the conjecture, it seems possible that it holds as soon as the metric is $C^2$ and so all the notions in the conjecture are well defined.

The work in \cite{BFMS1, BFMS2} was motivated in part by the Margulis commensurator arithmeticity theorem.
In that context, there is an analogy between \autoref{conj:tggen} and results by Eberlein and Farb--Weinberger \cite{Eberlein1,FarbWeinberger}.  In particular, the results of Farb--Weinberger suggest there might be an analogue of \autoref{conj:tggen} in the broader context of Riemannian metrics
on closed aspherical manifolds.

There are other more general questions than \autoref{conj:tggen} if one thinks of \autoref{thm:main} as a statement
about closed manifolds invariant under Anosov flows.  It seems possible that an Anosov flow admitting infinitely many  closed, flow invariant $C^1$ submanifolds is necessarily algebraically defined.
Related questions have been considered by Zeghib \cite{Zeghib1995_Sur-une-notion-dautonomie-de-systemes-dynamiques-appliquee-aux-ensembles-invariants}.

One can also conjecture a generalization of \autoref{thm:unclosed}:

\begin{conjecture}
\label{conj:unclosedgen}
	Let $(M,g)$ be a closed Riemannian manifold with negative sectional curvature, of dimension $n\geq 3$.
	Suppose that $M$ contains a maximal totally geodesic immersed submanifold $N$ which is not closed.
	Then the closure of $N$ is an immersion of a totally geodesic locally symmetric space of rank $1$ in $M$.
\end{conjecture}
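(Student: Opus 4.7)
The plan is to extend the method of Theorems \ref{thm:main} and \ref{thm:unclosed} from hypersurfaces to arbitrary dimension $d := \dim N$ with $2 \leq d \leq n-1$. Let $Z_d \subset \Gr_d(TM)$ be the set of tangent $d$-planes which are tangent to a germ of totally geodesic $d$-dimensional submanifold; by the exponential-map plus second-fundamental-form calculation of \autoref{cor:analyticity_of_a_i_z}, combined with \autoref{lem:fiberwise_vanishing_and_real_analyticity}, $Z_d$ is a closed real-analytic subset of $\Gr_d(TM)$. The lift $\tilde N \subset Z_d$ is an embedded $d$-submanifold, and, exactly as in the opening paragraph of the proof of \autoref{thm:unclosed}, the fact that $N$ is not closed but sits in a compact manifold forces $\tilde N$ to fail to be locally closed; hence the stratum of $Z_d$ that contains $\tilde N$ has dimension strictly greater than $d$.

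Next I would propagate this dimension jump via the geodesic flow. Working inside the orbit closure of unit tangent vectors to leaves accumulating on $\tilde N$, and running the Case 1 / Case 2 dichotomy of \autoref{prop:every_tangent_vector_is_contained_in_a_totally_geodesic}, density of either a stable or unstable manifold implies that the set $A_d \subset UM$ of unit vectors through which there passes a totally geodesic $d$-submanifold is saturated by center-stable or center-unstable manifolds. Since $A_d$ is closed and $g_t$-invariant, this forces $U\overline N \subseteq A_d$. Iterating the construction internally within $\overline N$ should produce a closed totally geodesic immersed submanifold $L \supseteq \overline N$ of some dimension $d' \geq d$, through every tangent $d$-plane of which there passes a totally geodesic $d$-submanifold contained in $L$.

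The last step is to conclude that $L$ is a rank-one locally symmetric space, with $\overline N$ a totally geodesic locally symmetric subspace of $L$. When $d' = d + 1$ this is exactly Cartan's axiom of planes \autoref{thm:Cartan}, forcing $L$ to be real hyperbolic. In general, one wants a higher-codimension Cartan axiom: a negatively curved manifold every tangent $d$-plane of which is tangent to a totally geodesic $d$-submanifold should be locally symmetric of rank one, with $d$ matching the dimension of a standard totally geodesic subspace of one of real, complex, quaternionic, or octonionic hyperbolic space (see \cite{survey_tot_geod_submanifolds} for known variants). Once $L$ is identified, the structure of $\overline N \subseteq L$ as a closed $g_t$-invariant totally geodesic subset of a rank-one symmetric space can be read off from the Lie theory of the isometry group of $L$, yielding the desired conclusion.

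The main obstacle is this last rigidity step in intermediate codimension. The topological input used in \autoref{ssec:analyzing_the_transitivity_group} --- nonexistence of tangent line fields on even spheres and the classification of transitive actions on unit spheres --- has no immediate analogue on Grassmannians $\Gr_d(\bR^{n-1})$ for general $d$, so the reduction-of-structure-group analysis on the frame bundle will require genuinely new topological and representation-theoretic input, likely combining Berger's holonomy classification with the frame-flow results of \cite{CekicLefeuvreMoroianu2023_On-the-ergodicity-of-the-frame-flow-on-even-dimensional-manifolds}. A secondary obstacle is controlling the induction on dimension: at intermediate stages one must rule out the possibility that the putative $L$ itself sits nontrivially inside a larger totally geodesic submanifold before applying the rank-one classification, which may require running the argument simultaneously in several intermediate dimensions.
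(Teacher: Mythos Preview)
This statement is \autoref{conj:unclosedgen}, which the paper poses as an \emph{open conjecture} in \autoref{ssec:directions_for_future_work_and_conjectures}; the paper contains no proof of it, so there is nothing to compare your proposal against. Your text is, appropriately, a strategy sketch rather than a proof, and you correctly identify the main obstruction: the frame-flow and transitivity-group analysis of \autoref{ssec:analyzing_the_transitivity_group} has no known analogue when $\SO_{n-2}(\bR)$ is replaced by the stabilizer of a $d$-plane for general $d$, and the corresponding higher-codimension Cartan axiom is not available.

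Beyond the gaps you already flag, two further issues deserve attention. First, the conjecture as stated does \emph{not} assume real-analyticity of the metric, so your opening move---invoking \autoref{cor:analyticity_of_a_i_z} and \autoref{lem:fiberwise_vanishing_and_real_analyticity} to conclude that $Z_d$ is real-analytic, and then using the stratification to force a dimension jump---is unavailable without adding that hypothesis. The paper's own theorems require analyticity precisely for this step, and the conjecture is deliberately more ambitious. Second, the passage ``Iterating the construction internally within $\overline N$ should produce a closed totally geodesic immersed submanifold $L \supseteq \overline N$'' is circular: at that stage $\overline N$ is only a closed subset of $M$, not a manifold, so there is no intrinsic unit tangent bundle or Anosov geodesic flow on which to rerun the Case~1/Case~2 dichotomy of \autoref{prop:every_tangent_vector_is_contained_in_a_totally_geodesic}. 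Establishing that $\overline N$ (or some canonical $L$ containing it) is a smooth totally geodesic submanifold is essentially the content of the conjecture; the homogeneous-dynamics analogues (Ratner-type orbit-closure theorems) suggest this is where the genuine difficulty lies, not merely in the endgame rigidity step.
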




\bibliographystyle{sfilip_bibstyle}
\bibliography{tot_geod_hyp}

\end{document}